\theoremstyle{plain}
\newtheorem{theorem}{Theorem}[section]
\newtheorem{lem}[theorem]{Lemma}
\newtheorem{prop}[theorem]{Proposition}
\theoremstyle{definition}
\newtheorem{df}[theorem]{Definition}
\newtheorem{rem}[theorem]{Remark}
\def\R{{\mathbb R}}
\def\N{{\mathbb N}}
\def\Z{{\mathbb Z}}
\def\C{{\mathcal C}}
\def\eps{\varepsilon}
\renewcommand{\phi}{\varphi}
\def\Q{\mathfrak{Q}}
\def\Int{\mathop\mathrm{Int}}
\def\Cl{\mathop\mathrm{Cl}}
\renewcommand{\setminus}{\smallsetminus}
\def\nin{\notin}
\renewcommand{\hat}{\widehat}
\renewcommand{\tilde}{\widetilde}
\def\cd{\raisebox{2pt}{$\centerdot$}}
\def\rpinf{+^{\hspace{-.1cm}\infty}}
\newcommand{\vecv}{\mathbf{v}}
\newcommand{\vecu}{\mathbf{u}}
\newcommand{\vecw}{\mathbf{w}}
\newcommand{\veczero}{\mathbf{0}}
\begin{document}

\title{Lozi-like maps}
\author{M.\ Misiurewicz
\thanks{This work was partially supported by a grant number 426602
  from the Simons Foundation to Micha{\l} Misiurewicz.}\ \
and
S.\ \v{S}timac
\thanks{Supported in part by the NEWFELPRO Grant No.~24 HeLoMa, and in
  part by the Croatian Science Foundation grant IP-2014-09-2285.}}
\date{}

\maketitle

\begin{abstract}
We define a broad class of piecewise smooth plane homeomorphisms which
have properties similar to the properties of Lozi maps, including the
existence of a hyperbolic attractor. We call those maps
\emph{Lozi-like}. For those maps one can apply our previous results on
kneading theory for Lozi maps. We show a strong numerical evidence
that there exist Lozi-like maps that have kneading sequences different
than those of Lozi maps.
\end{abstract}

{\it 2010 Mathematics Subject Classification:} 37B10, 37D45, 37E30,
54H20

{\it Key words and phrases:} Lozi map, Lozi-like map, attractor,
symbolic dynamics, kneading theory

\section{Introduction}\label{sec:intro}

This paper can be considered the second part of our paper~\cite{MS}.
In~\cite{MS} we developed three equivalent approaches to compressing
information about the symbolic dynamics of Lozi maps. Let us recall
that Lozi maps are maps of the Euclidean plane to itself, given by the
formula
\begin{equation}\label{e:lozi}
L_{a,b}(x,y)=(1+y-a|x|,b).
\end{equation}
For a large set of parameters $a,b$ this map has a hyperbolic
attractor (see~\cite{M}).

In~\cite{MS} we took a geometric approach, avoiding explicit use of
the piecewise linear formula~\eqref{e:lozi} (which was the base of
elegant results of Ishii~\cite{I}). In fact, we mentioned there that
our aim was to produce a theory that could be applied to a much larger
family of maps. Here we define axiomatically such large family, and
call its members \emph{Lozi-like maps}. We also give a concrete
example of its three-parameter subfamily, containing the family of
Lozi maps.

As a byproduct of considering this subfamily, we extend a little,
compared to~\cite{M}, the region in the parameter plane for the Lozi
maps, for which we can prove that a hyperbolic attractor exists.

Basic characterization of a Lozi map, one of the three obtained
in~\cite{MS}, is the set of \emph{kneading sequences}. While the
situation may appear similar to what we see in one dimension for
unimodal maps, there is a big difference. The one-dimensional analogue
of the Lozi family is the family of tent maps. There we have one
parameter and one kneading sequence. For the Lozi family we have two
parameters, but infinitely many kneading sequences. Thus, by using a
concrete formula~\eqref{e:lozi}, we immensely restrict the possible
sets of kneading sequences. It makes sense to conjecture that in a
generic $n$-parameter subfamily of Lozi-like maps, $n$ kneading
sequences determine all other kneading sequences, at least locally. In
fact, in our example at the end of this paper, we see that for the
Lozi family two kneading sequences may determine the parameter values,
and thus all kneading sequences.

As we mentioned, we wrote~\cite{MS} thinking about possible
generalizations. Therefore for the Lozi-like maps (under suitable
assumptions) all results of that paper hold, and the proofs are the
same, subject only to obvious modification of terminology. There are
only two exceptions, where in~\cite{MS} we used the results
of~\cite{I}. For those exceptions we provide new, general proofs in
the section about symbolic dynamics.

The paper is organized as follows. In Section~\ref{sec:def} we provide
the definitions and prove the basic properties of Lozi-like maps. In
Section~\ref{sec:attractor} we prove the existence of an attractor. In
Section~\ref{sec:symbolic} we give two proofs about symbolic dynamics,
that are different than in~\cite{MS}. Finally, in Section~\ref{sec:ex}
we present an example of a three-parameter family of Lozi-like maps.
We also show that it is essentially larger than the Lozi family. This
last result uses a computer in a not completely rigorous way, so
strictly speaking it is not a proof, but a strong numerical evidence.

\section{Definitions}\label{sec:def}

A \emph{cone} $K$ in $\R^2$ is a set given by a unit vector $\vecv$
and a number $\ell\in(0,1)$ by
\begin{equation}\label{econe1}
K=\{\vecu\in\R^2:|\vecu\cdot\vecv|\ge \ell\|\vecu\|\},
\end{equation}
where $||\cdot||$ denotes the usual Euclidean norm. The straight line
$\{t\vecv:t\in\R\}$ is the \emph{axis} of the cone. Two cones will be
called \emph{disjoint} if their intersection consists only of the
vector $\veczero$. Clearly, the image under an invertible linear
transformation of $\R^2$ of a cone is a cone, although the image of
the axis is not necessarily the axis of the image.

\begin{lem}\label{cone1}
Let $K^u$ and $K^s$ be disjoint cones. Then there is an invertible
linear transformation $T:\R^2\to\R^2$ such that the $x$-axis is the
axis of $T(K^u)$ and the $y$-axis is the axis of $T(K^s)$.
\end{lem}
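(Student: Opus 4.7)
My plan is to construct $T$ directly using a basis adapted to the bounding lines of $K^u$. Denote by $L_1, L_2$ the two lines through the origin on which the defining inequality of $K^u$ becomes an equality, and by $M_1, M_2$ those for $K^s$. Choose nonzero $\vecu_1 \in L_1$ and $\vecu_2 \in L_2$ so that the axis $\vecv^u$ of $K^u$ is a positive combination $\vecv^u = c_1 \vecu_1 + c_2 \vecu_2$ with $c_1, c_2 > 0$. Because $K^u$ and $K^s$ are disjoint, the sector of $K^s$ containing $\vecv^s$ is a connected subset of the open complement of $K^u$, which consists of the two open sectors $\{a\vecu_1 + b\vecu_2 : a>0, b<0\}$ and $\{a\vecu_1 + b\vecu_2 : a<0, b>0\}$; after possibly replacing $\vecv^s$ by $-\vecv^s$ I take the former. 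Choosing nonzero $\vecw_1 \in M_1$ and $\vecw_2 \in M_2$ that span this sector of $K^s$ positively, and writing $\vecw_i = p_i \vecu_1 + q_i \vecu_2$, I get $p_i > 0$ and $q_i < 0$ for $i=1,2$.

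Next I define $T$ by $T\vecu_1 = (1,1)$ and $T\vecu_2 = (r,-r)$, with $r > 0$ to be determined. Invertibility is automatic since $(1,1)$ and $(r,-r)$ are linearly independent ($\det = -2r \neq 0$). This $T$ sends $L_1, L_2$ to the lines of slopes $\pm 1$, which are symmetric about the $x$-axis. The key step is to tune $r$ so that $T(M_1), T(M_2)$ become symmetric about the $y$-axis. Computing $T\vecw_i = (p_i + q_i r,\ p_i - q_i r)$ and imposing that the slopes of these two images sum to zero, a short manipulation reduces the condition to $r^2 = p_1 p_2/(q_1 q_2)$. The main obstacle, which is where the disjointness of the cones really enters, is to verify that the right-hand side is positive; but $p_1 p_2 > 0$ and $q_1 q_2 > 0$ by the sign analysis above, so I take $r$ to be the positive square root.

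It remains to check that the axes of $T(K^u)$ and $T(K^s)$ are the $x$- and $y$-axis respectively. The bounding lines of both image cones are now symmetric about both coordinate axes, so the axis of each image cone is whichever coordinate axis lies inside it. Since $T\vecv^u = c_1(1,1) + c_2(r,-r)$ has positive first coordinate, $T(K^u)$ contains the positive $x$-axis and thus has the $x$-axis as its axis. Writing $\vecv^s = p\vecu_1 + q\vecu_2$ with $p > 0,\ q < 0$ (inherited from the $p_i, q_i$, since $\vecv^s$ is a positive combination of $\vecw_1, \vecw_2$), the image $T\vecv^s = (p + qr,\ p - qr)$ has second coordinate $p - qr > 0$, so $T(K^s)$ contains the positive $y$-axis and has the $y$-axis as its axis.
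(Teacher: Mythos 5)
Your construction is correct, and it takes a genuinely different route from the paper's. The paper builds $T$ as a composition of three maps: a first linear map placing the two cones in complementary pairs of quadrants, then a diagonal map $\mathrm{diag}(1,c)$ whose parameter is found by an intermediate value argument so that the axes of the two image cones become perpendicular, and finally a rotation aligning those axes with the coordinate axes. You instead construct $T$ in one stroke from a basis sitting on the boundary lines of $K^u$, and solve explicitly for the parameter $r$ that makes the images of the boundary lines of $K^s$ symmetric about the coordinate axes; disjointness enters exactly where it should, in the positivity of $p_1p_2/(q_1q_2)$ (your cancellation giving $r^2=p_1p_2/(q_1q_2)$ is correct, and this choice of $r$ can never make $T(M_i)$ vertical, as that would force $M_1=M_2$). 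Your approach yields an explicit closed-form transformation and avoids both the continuity argument and the final rotation; the paper's approach avoids slope bookkeeping and is more directly geometric. Both hinge on the same subtlety, namely that the axis of a cone is a Euclidean bisector and is not preserved by linear maps, so the symmetry (or perpendicularity) of the images has to be arranged by hand.

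One step at the end should be tightened. From ``$T\vecv^u$ has positive first coordinate'' you cannot yet conclude that the $x$-axis lies in $T(K^u)$: a vector such as $(1,10)$ has positive first coordinate but lies in the vertical double sector cut out by the lines of slope $\pm1$. What you need, and what your formula gives immediately, is the stronger inequality $c_1+c_2r>|c_1-c_2r|$, which places $T\vecv^u$ strictly inside $\{(x,y):|y|<|x|\}$ and hence identifies the axis of $T(K^u)$ as the $x$-axis. Likewise, $p-qr>0$ alone does not decide which coordinate axis lies in $T(K^s)$. A clean repair uses facts you already have: both $T\vecw_1=(p_1+q_1r,\,p_1-q_1r)$ and $T\vecw_2=(p_2+q_2r,\,p_2-q_2r)$ have positive second coordinate, so the sector they span positively lies in the open upper half-plane; since its two boundary rays are mirror images across the $y$-axis, that sector contains the positive $y$-axis, and by the symmetry of the boundary lines the axis of $T(K^s)$ is the $y$-axis. (Alternatively, once the $x$-axis is known to lie in $T(K^u)$, disjointness of the image cones forces the axis of $T(K^s)$, which must be one of the two coordinate axes, to be the $y$-axis.) With this adjustment the proof is complete.
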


\begin{proof}
We will define $T$ as the composition of three linear transformations.

Choose two of the four components of $\R^2\setminus(K^u\cup K^s)$,
such that they are not images of each other under the central symmetry
with respect to the origin. Then choose one vector in each of those
components. There is an invertible linear transformation $T_1$ such
that the images of those vectors under $T_1$ are the two basic vectors
$\langle 1,0\rangle$ and $\langle 0,1\rangle$. Then one of the cones
$T_1(K^u)$, $T_1(K^s)$, lies in the first and third quadrants, while
the other one lies in the second and fourth quadrants.

The second transformation, $T_2$, will be given by a matrix of the
form
\[
\begin{bmatrix}
1 & 0 \\
0 & c
\end{bmatrix},
\]
where $c>0$. The cones $T_2(T_1(K^u))$ and $T_2(T_1(K^s))$ vary
continuously with $c$, so their axes also vary continuously with $c$.
We will measure the angle between those axes as the angle between
their halves contained in the first and fourth quadrants. As $c$ goes
to 0, then this angle approaches 0; as $c$ goes to $\infty$, then this
angle approaches $\pi$. Therefore there is a value of $c$ for which
this angle is equal to $\pi/2$. We take this value of $c$ for our
$T_2$.

Now the third transformation, $T_3$, is the rotation that makes the
axis of the cone $T_3(T_2(T_1(K^u)))$ horizontal. Since the axes of
the cones $T_2(T_1(K^u))$ and $T_2(T_1(K^s))$ are perpendicular, so
are the axes of $T_3(T_2(T_1(K^u)))$ and $T_3(T_2(T_1(K^s)))$, and
therefore the axis of $T_3(T_2(T_1(K^s)))$ is vertical. Hence, the
transformation $T=T_3\circ T_2\circ T_1$ satisfies the conditions from
the statement of the lemma.
\end{proof}

Let us note that the above lemma can be also proved using projective
geometry and cross-ratios.

When we say ``smooth'', we will mean ``of class $C^1$''.

\begin{lem}\label{cone2}
If a cone $K$ is given by~\eqref{econe1} with $\vecv=\langle 1,0
\rangle$ (respectively $\vecv=\langle 0,1 \rangle$) and $\gamma$ is a
smooth curve with the tangent vector at each point contained in $K$,
then $\gamma$ is a graph of a function $y=f(x)$ (respectively
$x=f(y)$), which is Lipschitz continuous with constant
$\sqrt{1-\ell^2}/\ell$.
\end{lem}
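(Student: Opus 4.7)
The plan is to translate the cone condition into a pointwise slope bound on tangent vectors, then use that bound to show first that $\gamma$ is a graph and second that the resulting function is Lipschitz. I will write out the argument only for $\vecv = \langle 1, 0 \rangle$; the case $\vecv = \langle 0, 1 \rangle$ is symmetric.

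First, I would unpack the defining inequality. For a nonzero vector $\vecu = \langle u_1, u_2 \rangle$, the condition $|\vecu\cdot\vecv| \ge \ell\|\vecu\|$ with $\vecv = \langle 1,0 \rangle$ becomes $u_1^2 \ge \ell^2(u_1^2 + u_2^2)$, which rearranges to
\[
|u_2| \le \frac{\sqrt{1-\ell^2}}{\ell}\,|u_1|.
\]
In particular every nonzero vector in $K$ has $u_1 \ne 0$.

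Next, I would parameterize $\gamma$ by a regular smooth map $t \mapsto (x(t), y(t))$ on some interval. Since the tangent vector $(x'(t), y'(t))$ is nonzero and lies in $K$, the first step gives $x'(t) \ne 0$ for every $t$. By continuity $x'$ has constant sign, so $x$ is a strictly monotone $C^1$ function of $t$ with nowhere-vanishing derivative; hence $t$ is a $C^1$ function of $x$ on the image interval, and $\gamma$ is precisely the graph of $f(x) := y(t(x))$. The chain rule gives $f'(x) = y'(t(x))/x'(t(x))$, and the slope bound from the first step yields $|f'(x)| \le \sqrt{1-\ell^2}/\ell$ everywhere.

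Finally, the mean value theorem applied to $f$ on any subinterval of its domain produces the Lipschitz estimate $|f(x_1) - f(x_2)| \le (\sqrt{1-\ell^2}/\ell)\,|x_1 - x_2|$ with the asserted constant. The only point requiring a brief comment is the passage from a local parameterization to the conclusion that $\gamma$ is globally a graph; this is the main (mild) obstacle, and it is handled by observing that monotonicity of $x(t)$ on each parameter interval, combined with the fact that a smooth curve can be covered by regular parameterizations fitting together consistently, forces the $x$-projection of $\gamma$ to be injective overall.
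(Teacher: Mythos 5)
Your proof is correct, but it is organized a bit differently from the paper's. You invert the parameterization: the cone condition forces $x'(t)\ne 0$, hence $x(t)$ is strictly monotone, so $\gamma$ is the graph of a $C^1$ function $f$ with $|f'(x)|\le\sqrt{1-\ell^2}/\ell$, and the Lipschitz bound then comes from the mean value theorem applied to $f$. The paper never differentiates $f$: it obtains the graph property by a Rolle-type contradiction (two points of $\gamma$ with the same $x$-coordinate would force a vertical tangent vector in between, which does not lie in $K$), and then gets the Lipschitz estimate in one stroke from the parametric (Cauchy) mean value theorem --- between $x_1$ and $x_2$ there is a point of $\gamma$ whose tangent is parallel to the chord $\langle x_2-x_1,\,f(x_2)-f(x_1)\rangle$, and since that direction lies in $K$ the inequality $|x_2-x_1|\ge\ell\sqrt{(x_2-x_1)^2+(f(x_2)-f(x_1))^2}$ is immediate. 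Your route yields slightly more (that $f$ is $C^1$, not merely Lipschitz) at the cost of the inverse-function step; the paper's version is shorter and needs only continuity of $f$. Both arguments implicitly use that $\gamma$ is a connected curve admitting a single regular parameterization, so that ``between'' makes sense along the curve; you at least flag this point, and if you want to close it cleanly, observe that a connected $C^1$ curve is either parameterized by an interval or is a closed loop, and a loop is impossible here because $x$ would attain an interior maximum on it, producing a vertical tangent vector outside $K$.
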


\begin{proof}
Consider the case $\vecv=\langle 1,0 \rangle$; the other one is
analogous. If $\gamma$ is not a graph of such function, then there are
two points in $\gamma$ with the same $x$-coordinate, so between them
there is a point of $\gamma$ at which the tangent vector is vertical.
However, vertical vectors do not belong to $K$. Thus, $\gamma$ is a
graph of $y=f(x)$, where $f$ is a continuous function. If $x_1\ne x_2$
belong to the domain of $f$, then between $x_1$ and $x_2$ there is a
point $z$ at which the vector tangent to $\gamma$ is parallel to the
vector $\langle x_2-x_1,f(x_2)-f(x_1)\rangle$. By the assumption, this
vector belongs to $K$, so
\[
|x_2-x_1|\ge\ell\sqrt{(x_2-x_1)^2+(f(x_2)-f(x_1))^2}.
\]
This inequality is equivalent to
\[
|f(x_2)-f(x_1)|\le\frac{\sqrt{1-\ell^2}}{\ell}|x_2-x_1|,
\]
so $f$ is Lipschitz continuous with constant $\sqrt{1-\ell^2}/\ell$.
\end{proof}

We will say that a curve as above is \emph{infinite in both
  directions} if the domain of the corresponding function $f$ is all
of $\R$.

\begin{df}\label{df:uc}
We call a pair of cones $K^u$ and $K^s$ \emph{universal} if they are
disjoint and the axis of $K^u$ is the $x$-axis, and the axis of $K^s$
is the $y$-axis.
\end{df}

Note that by Lemma~\ref{cone1}, any two disjoint cones can become
universal via an invertible linear transformation. In fact, we can use
one more linear transformation to make the constants $\ell_u$ and
$\ell_s$ (of universal cones $K^u$ and $K^s$ respectively) equal, but
this does not give us any additional advantage.

\begin{lem}\label{cone3}
Let $K^u$ and $K^s$ be a universal pair of cones. Let $\gamma_u$ and
$\gamma_s$ be smooth curves, infinite in both directions, and with the
tangent vector at each point contained in $K^u$ and $K^s$,
respectively. Then $\gamma_u$ and $\gamma_s$ intersect at exactly one
point.
\end{lem}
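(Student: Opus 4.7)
The plan is to reduce the intersection problem to a one-dimensional fixed point problem. By Lemma~\ref{cone2} applied with $\vecv = \langle 1,0\rangle$, since $\gamma_u$ is infinite in both directions we may write $\gamma_u$ as the graph of a Lipschitz function $f\colon\R\to\R$, $y=f(x)$, with constant $L_u = \sqrt{1-\ell_u^2}/\ell_u$, where $\ell_u\in(0,1)$ is the parameter of $K^u$. Symmetrically, $\gamma_s$ is the graph of a Lipschitz function $g\colon\R\to\R$, $x=g(y)$, with constant $L_s=\sqrt{1-\ell_s^2}/\ell_s$. A point $(x,y)$ lies in $\gamma_u\cap\gamma_s$ exactly when $y=f(x)$ and $x=g(y)=g(f(x))$, so the intersection points correspond bijectively to the fixed points of the map $g\circ f\colon\R\to\R$.

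The crucial step is to extract, from the hypothesis that $K^u$ and $K^s$ are disjoint, the quantitative inequality $L_u L_s < 1$. A direct computation with~\eqref{econe1} shows that for a universal pair the cones $K^u$ and $K^s$ are disjoint if and only if $\ell_u^2+\ell_s^2 > 1$: indeed, the boundary rays of $K^u$ have slopes $\pm\sqrt{1-\ell_u^2}/\ell_u$, and the boundary rays of $K^s$ have slopes $\pm\ell_s/\sqrt{1-\ell_s^2}$, so disjointness amounts to
\[
\frac{\sqrt{1-\ell_u^2}}{\ell_u} < \frac{\ell_s}{\sqrt{1-\ell_s^2}},
\]
which rearranges exactly to $\ell_u^2+\ell_s^2 > 1$, and this in turn is algebraically equivalent to $L_u L_s < 1$.

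With this inequality in hand, $g\circ f$ is a contraction of $\R$ (in the standard metric), so by the Banach fixed point theorem it has a unique fixed point, proving the existence and uniqueness of the intersection. Alternatively, one can avoid invoking Banach by observing that $h(x)=g(f(x))-x$ satisfies $h(x_1)-h(x_2) \le (L_uL_s-1)(x_1-x_2) < 0$ whenever $x_1>x_2$, so $h$ is strictly decreasing, while the contraction bound forces $h(x)\to\mp\infty$ as $x\to\pm\infty$; the intermediate value theorem then yields a unique zero.

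The main obstacle is really only the bookkeeping in the second step, namely verifying the equivalence between disjointness of the universal cones and the contraction condition $L_uL_s<1$. Everything else is either an immediate application of Lemma~\ref{cone2} or a standard fixed point argument on the real line.
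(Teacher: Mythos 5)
Your proposal is correct and follows essentially the same route as the paper: write $\gamma_u$, $\gamma_s$ as Lipschitz graphs via Lemma~\ref{cone2}, reduce the intersection to a fixed point of the composition, and observe that disjointness of the universal cones gives $\ell_u^2+\ell_s^2>1$, equivalently $L_uL_s<1$, so the composition is a contraction with a unique fixed point. Your explicit slope computation for the disjointness inequality (and the optional intermediate-value variant of the fixed-point step) are just minor variations on the paper's argument and are both fine.
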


\begin{proof}
If $K^u$ and $K^s$ had a common boundary line, then, by the
Pythagorean theorem, we would have $\ell_u^2+\ell_s^2=1$. Since they
are disjoint, those $\ell$s are larger, so $\ell_u^2+\ell_s^2>1$.

By Lemma~\ref{cone2}, $\gamma_u$ is the graph of a function
$y=f_u(x)$, and $\gamma_s$ is the graph of a function $x=f_s(y)$. By
the assumptions, both $f_u$ and $f_s$ are defined on all of $\R$.
Thus, in order to find points of intersection of $\gamma_u$ and
$\gamma_s$, we have to solve the system of equations $y=f_u(x)$,
$x=f_s(y)$. That is, we have to solve the equation $x=f_s(f_u(x))$.

By Lemma~\ref{cone2}, functions $f_u$ and $f_s$ are Lipschitz
continuous with constants $\sqrt{1-\ell_u^2}/\ell_u$ and
$\sqrt{1-\ell_s^2}/\ell_s$, respectively. Therefore the function
$f_s\circ f_u$ is Lipschitz continuous with the constant
\[
\ell=\frac{\sqrt{1-\ell_u^2}\sqrt{1-\ell_s^2}}{\ell_u\ell_s}.
\]
However, the inequality $\ell_u^2+\ell_s^2>1$ (which, as we noticed,
holds) is equivalent to $\ell<1$, so the map $f_s\circ f_u$ is a
contraction. Therefore it has a unique fixed point, which means that
$\gamma_u$ and $\gamma_s$ intersect at exactly one point.
\end{proof}

For an open set $U \subseteq \R^2$, a \emph{cone-field} $\C$ on $U$ is
the assignment of a cone $K_P$ to each point $P \in U$ such that the
axis $\vecv(P)$ and the coefficient $\ell(P)$ vary continuously with
$P$.

\begin{df}\label{df:sh}
Let $F_1, F_2 : \R^2 \to \R^2$ be $C^1$ diffeomorphisms. We say that
$F_1$ and $F_2$ are \emph{synchronously hyperbolic} if they are either
both order reversing, or both order preserving, and there exist
$\lambda > 1$, a universal pair of cones $K^u$ and $K^s$, and cone
fields $\C^u$ and $\C^s$ (consisting of cones $K^u_P$ and $K^s_P$, $P
\in \R^2$, respectively) which satisfy the following properties:
\begin{enumerate}
\item[(S1)] For every point $P \in \R^2$ we have $K^u_P \subset K^u$,
  $K^s_P \subset K^s$, ${DF_i}_P(K^u_P) \subset K^u_{F_i(P)}$, and
  $D{{F_i}_P^{-1}}(K^s_P) \subset K^s_{F_i^{-1}(P)}$, for $i = 1,2$.
\item[(S2)] For every point $P \in \R^2$ and $i = 1,2$ we have
  $||DF_i(\vecu)|| \ge \lambda||\vecu||$ for every $\vecu \in K^u_P$
  and $||DF^{-1}_i(\vecw)|| \ge \lambda||\vecw||$ for every $\vecw \in
  K^s_P$.
\item[(S3)] There exists a smooth curve $\Gamma \subset \R^2$ such
  that for every $P \in \Gamma$ we have $F_1(P) = F_2(P)$, the vector
  tangent to $\Gamma$ at $P$ belongs to $K^s_P$, and the vector
  tangent to $F_i(\Gamma)$ at $F_i(P)$ belongs to $K^u_{F_i(P)}$. We
  require that $\Gamma$ is infinite in both directions.
\end{enumerate}
We call $\Gamma$ the \emph{divider}. It divides the plane into two
parts which we call the \emph{left half-plane} and the \emph{right
  half-plane}. Also $F_1(\Gamma) = F_2(\Gamma)$ divides the plane into
two parts which we call the \emph{upper half-plane} and the
\emph{lower half-plane}.
\end{df}

\begin{rem}\label{newrem}
Since $F_1$ and $F_2$ are either both order reversing, or both order
preserving, for any $P \in \R^2$, $F_1(P)$ and $F_2(P)$ belong to the
same (upper or lower) half-plane. Without loss of generality we may
assume that $F_i$, $i = 1,2$, maps the left half-plane onto the lower
one and the right half-plane onto the upper one.
\end{rem}

Since the existence of the invariant cone fields implies
hyperbolicity (see~\cite[Proposition~5.4.3] {BS}), if $F_1$ and
$F_2$ are synchronously hyperbolic then both are hyperbolic (with
stable and unstable directions of dimension 1). Also, for each of
them, by (S1) and Lemma~\ref{cone2} the stable and unstable manifolds
of any point are infinite in both directions.

Recall that for a map $F$ a \emph{trapping region} is a nonempty set
that is mapped with its closure into its interior. A set $A$ is an
\emph{attractor} if it has a neighborhood $U$ which is a trapping
region, $A=\bigcap_{n=0}^\infty F^n(U)$, and $F$ restricted to $A$ is
topologically transitive.

\begin{df}\label{df:ll}
Let $F_1, F_2 : \R^2 \to \R^2$ be synchronously hyperbolic $C^1$
diffeomorphisms with the divider $\Gamma$. Let $F : \R^2 \to \R^2$ be
defined by the formula
\[
F(P) =
\begin{cases}
F_1(P), & \textrm{if $P$ is in the left  half-plane,}\\
F_2(P), & \textrm{if $P$ is in the right half-plane.}
\end{cases}
\]
We call the map $F$ \emph{Lozi-like} if the following hold:
\begin{enumerate}
\item[(L1)] $-1 < \det DF_i(P) < 0$ for every point $P \in \R^2$ and
  $i = 1, 2$.
\item[(L2)] There exists a trapping region $\Delta$ (for the map $F$),
  which is homeomorphic to an open disk and its closure is
  homeomorphic to a closed disk.
\end{enumerate}
\end{df}

Observe that by Remark~\ref{newrem}, $F$ is a homeomorphism of $\R^2$
onto itself.

Obviously, the Lozi maps $L_{a, b}(x, y) = (1 + y - a|x|, bx)$, with
$a$, $b$ as in~\cite{M}\footnote{$b > 0$, $a\sqrt{2} > b+2$, $b <
  \frac{a^2-1}{2a+1}$, $2a + b < 4$}, provide an example of Lozi-like
maps with $y$-axis as the divider. For the set $\Delta$ we take a
neighborhood of the triangle as in~\cite{M} that usually serves as the
trapping region for the Lozi map.

Let us show some properties of a Lozi-like map $F$ which follow from
the definition.

\begin{lem}\label{lem:fixed point}
\begin{enumerate}
\item[$(P1)$] $\Delta \cap \Gamma \ne \emptyset$ and consequently
  $\Delta \cap F(\Gamma) \ne \emptyset$.
\item[$(P2)$] There exists a unique fixed point in $\Delta$.
\end{enumerate}
Let us denote this fixed point $X$. We may assume without loss of
generality that it belongs to the right half-plane.
\begin{enumerate}
\item[$(P3)$] Let $\lambda_1$ and $\lambda_2$ denote the eigenvalues
  of $DF(X)$ with $|\lambda_1| > 1$ and $|\lambda_2| < 1$. Then
  $\lambda_1 < -1$ and $\lambda_2 > 0$.
\item[$(P4)$] Let $\hat \lambda_1$ and $\hat \lambda_2$ denote the
  eigenvalues of $DF(P) = DF_1(P)$ when $P$ is in the left half-plane,
  and $|\hat \lambda_1| > 1$, $|\hat \lambda_2| < 1$. Then $\hat
  \lambda_1 > 1$ and $\hat \lambda_2 < 0$.
\item[$(P5)$] $X \ne \Gamma \cap F(\Gamma)$ and consequently $X \nin
  \Gamma \cup F(\Gamma)$.
\end{enumerate}
\end{lem}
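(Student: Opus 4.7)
The plan is to handle (P1)--(P5) in order, since the later parts lean on the earlier ones; (P3)--(P4) are the step I expect to require the most care.

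For (P1), I argue by contradiction. If $\Delta\cap\Gamma=\emptyset$, the connected open set $\Delta$ lies entirely in one half-plane; by relabeling, the right one. Then $F$ restricts on $\Delta$ to the $C^1$ diffeomorphism $F_2$, so Brouwer applied to $F\colon\overline\Delta\to\Delta\subset\overline\Delta$ produces a fixed point $X\in\Delta$. Take a short smooth arc $\gamma\subset\Delta$ through $X$ tangent to $K^u$; by trapping and (S1)--(S2), each iterate $F_2^n(\gamma)$ is a smooth arc in $\overline\Delta$ tangent to $K^u$ of length at least $\lambda^n\,\mathrm{length}(\gamma)$. By Lemma~\ref{cone2} it is also the graph of a Lipschitz function with constant $L=\sqrt{1-\ell_u^2}/\ell_u$, hence of length at most $\mathrm{diam}(\Delta)\sqrt{1+L^2}$. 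This bound is violated as $n\to\infty$. The consequence follows at once: any $P\in\Delta\cap\Gamma$ gives $F(P)\in F(\Delta)\cap F(\Gamma)\subset\Delta\cap F(\Gamma)$.

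For (P2), existence follows from Brouwer applied to $F|_{\overline\Delta}$, so I concentrate on uniqueness. The Lefschetz number of $F|_{\overline\Delta}$ equals $1$ (since $\overline\Delta$ is a disk), so the signed count of fixed points in $\Delta$ equals $1$. Each fixed point $Y$ is hyperbolic with real eigenvalues of opposite signs (by $\det DF_i<0$); its Lefschetz index $\mathrm{sign}\bigl((1-\lambda_+(Y))(1-\lambda_-(Y))\bigr)$ equals $+1$ iff the expanding eigenvalue is the negative one, since $\lambda_-(Y)<0$ forces $1-\lambda_-(Y)>0$. Thus uniqueness reduces to (P3) applied at every fixed point in $\Delta$.

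Parts (P3)--(P4) are the crux. The plan is to convert Remark~\ref{newrem} into sign information on the eigenvalues. At a point $P_0\in\Gamma$ pick a unit vector $\mathbf{n}$ transverse to $\Gamma$ pointing into the right half-plane. By Remark~\ref{newrem} the piecewise $C^1$ curve $t\mapsto F(P_0+t\mathbf{n})$ crosses $F(\Gamma)$ at $t=0$ from lower to upper, so both one-sided derivatives $DF_2(P_0)\mathbf{n}$ and $DF_1(P_0)\mathbf{n}$ at $t=0$ point into the upper half-plane, i.e.\ lie on the same side of the tangent line to $F(\Gamma)$ at $F(P_0)$. Combined with $DF_i(P_0)\mathbf{n}\in K^u$ from (S1), with $\det DF_i<0$, and with the expansion bound (S2), a computation in the basis $(\mathbf{n},\vecv)$ where $\vecv\in K^s_{P_0}$ is tangent to $\Gamma$ pins down $\mathrm{tr}\,DF_2(P_0)<0$ and $\mathrm{tr}\,DF_1(P_0)>0$. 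In a $2\times 2$ matrix with negative determinant the expanding (larger-modulus) eigenvalue has the sign of the trace, so the unstable eigenvalue of $DF_2(P_0)$ is negative while that of $DF_1(P_0)$ is positive. Continuity within each half-plane, with (S2) keeping the moduli bounded away from $1$, propagates these signs to every interior point, giving (P4) and, at the fixed point $X$, (P3). Turning the upper-direction constraint on the one-sided derivatives into the precise sign of the trace is what I expect to need the most care.

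For (P5), Lemma~\ref{cone3} makes $\Gamma\cap F(\Gamma)$ a single point $Q$, and $F(X)=X$ gives $X\in F(\Gamma)\iff X\in\Gamma$, so it suffices to show $X\notin\Gamma$. If $X\in\Gamma$, then $F_1(X)=F_2(X)=X$, and the sign conclusions of (P3)--(P4), extended by continuity of the traces to $\Gamma$, would force $\mathrm{tr}\,DF_1(X)>0>\mathrm{tr}\,DF_2(X)$. Using $DF_1(X)\vecv=DF_2(X)\vecv$, which makes the column $DF_i(X)\vecv$ in the basis $(\vecv,\mathbf{n})$ common to both matrices, together with $\det DF_i(X)\in(-1,0)$ and the common preservation of $K^u_X$ by (S1), a short case analysis on the remaining column $DF_i(X)\mathbf{n}$ produces a contradiction with these opposite trace signs. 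Hence $X\notin\Gamma$, and the rest of (P5) follows.
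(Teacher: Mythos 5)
Your step (P1) is fine (it is a fleshed-out version of the paper's one-line argument; just take the arc tangent to the cone field $K^u_P$, not merely to the universal cone $K^u$, so that (S1)--(S2) actually apply), but the crux of your plan, the deduction in (P3)--(P4) of $\mathrm{tr}\,DF_2(P_0)<0$ and $\mathrm{tr}\,DF_1(P_0)>0$ at $P_0\in\Gamma$ from the crossing behaviour of $t\mapsto F(P_0+t\mathbf{n})$, the cone conditions (S1)--(S2) and $\det DF_i\in(-1,0)$, cannot work: every input you use is symmetric under swapping $F_1\leftrightarrow F_2$, while the conclusion is antisymmetric. Indeed, by Remark~\ref{newrem} \emph{each} $F_i$ separately maps the right half-plane onto the upper one, so for both $i$ the one-sided derivative $DF_i(P_0)\mathbf{n}$ lies weakly on the upper side of the tangent line to $F(\Gamma)$ at $F(P_0)$; thus $DF_1(P_0)$ and $DF_2(P_0)$ satisfy literally the same list of constraints, and no computation can force their traces to have opposite signs. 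A concrete counterexample to the claimed implication: take $F_1=F_2\colon(x,y)\mapsto(1+y+(a-c)x,\,bx)$ with $\Gamma$ the $y$-axis; then (S1)--(S3), (L1), Remark~\ref{newrem} and your crossing condition hold at every point of $\Gamma$, yet $\mathrm{tr}\,DF_2=a-c>0$. The only axiom distinguishing the right branch from the left one is (L2), and the paper's proof of (P3) uses exactly it: the half of $W^u_{F_2}(X)$ lying entirely in the right half-plane (it meets $\Gamma$ in one point by Lemma~\ref{cone3}) would, if $\lambda_1>1$, be invariant under $F=F_2$, and its points would escape to infinity, contradicting the fact that they start in $\Delta$ and $\Cl\Delta$ is compact and forward invariant. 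Any correct proof of the eigenvalue signs must invoke the trapping region in some such way; your purely local argument at $\Gamma$ omits it, and the ``propagation by continuity'' and your (P5) case analysis inherit the gap.

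A secondary problem sits in your uniqueness step. The Lefschetz count is an attractive alternative to the paper's argument, but as you set it up it requires the expanding eigenvalue to be negative at \emph{every} fixed point of $F$ in $\Delta$, including a hypothetical one in the left half-plane; that conflicts with your own (P4), by which $DF_1$ has positive expanding eigenvalue there, so such a point would have index $-1$ and a total count of $1$ would not exclude, say, two index $+1$ points together with one index $-1$ point. The paper goes the other way: if a second fixed point $Y\in\Delta$ lay in the left half-plane, the same trapping-region argument as in (P3), applied to the half of $W^u_{F_1}(Y)$ contained in the left half-plane, shows its expanding eigenvalue would be $<-1$; with both unstable eigenvalues negative one then derives an escape-to-infinity contradiction along $W^u_F(X)$, and only afterwards concludes (P4) (consistently, since the actual fixed point of $F_1$ lies outside $\Delta$). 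To salvage your outline you would need to replace the trace computation by this kind of unstable-manifold/trapping-region argument at each (actual or hypothetical) fixed point, after which either the paper's escape argument or a corrected index count finishes (P2), and (P5) follows by applying the same argument to a common fixed point on $\Gamma$.
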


\begin{proof}
(P1) If $\Delta$ does not intersect the divider, then $\Delta$ is a
  trapping region for $F_i$ for some $i \in \{ 1, 2 \}$, what is not
  possible since $F_i$ is hyperbolic with the unstable direction of
  dimension 1.

(P2) Existence of a fixed point: By the definition of a trapping
  region $F(\Cl \Delta) \subset \Delta$. By (L2) $\Cl \Delta$ is
  homeomorphic to a closed disk and existence of at least one fixed
  point follows by Brouwer's fixed point theorem.

We will prove uniqueness later.

(P3) Let $W^u(X)$ denote the unstable manifold of $F_2$ at $X$.
By~(S3),~(S1) and Lemma~\ref{cone3}, $\Gamma$ and $W^u(X)$ intersect
at exactly one point. Let us denote by $W^u(X)^+$ that half of
$W^u(X)$ which starts at $X$ and completely lies in the right
half-plane. Then for every point $P \in W^u(X)^+$, $F(P) = F_2(P)$.

By (L1) one eigenvalue is positive and the other one is negative. Let
us suppose, by contradiction, that $\lambda_1 > 1$. Then for $P\in
W^u(X)^+$, $F(P)\in W^u(X)^+$. Therefore, for every point $P \in
W^u(X)^+$, $P \ne X$, the distance between $F^n(P)$ and $X$ goes to
infinity when $n \to \infty$, contradicting our assumption that $X \in
\Delta$.

(P2) Uniqueness of the fixed point: Let us suppose, by contradiction,
that there are two fixed points $X, Y \in \Delta$. They lie on the
opposite sides of $\Gamma$ (since $F_1$ and $F_2$ are hyperbolic). Let
$Y$ belong to the left half-plane. Let $W^u_F(X)$ denote the unstable
manifold of $F$ at $X$. By the proof of (P3), the negative eigenvalues
of both $DF(Y) = DF_1(Y)$ and $DF(X) = DF_2(X)$ are smaller than $-1$.
Consequently, for every $P, Q \in W^u_F(X)$, if the first coordinate
of $P$ is smaller than the first coordinate of $Q$, then this
inequality is reversed for $F(P)$ and $F(Q)$. Therefore, the distance
between $F^n(P)$ and $X$ goes to infinity when $n \to \infty$ for
every point $P \in W^u_F(X)$, $P \ne X$, contradicting our assumption
that $X \in \Delta$.

(P4) follows from the proof of (P2)-uniqueness.

(P5) follows from (P4) and (P3) similarly as in the proof of
(P2)-uniqueness.
\end{proof}

\section{Attractor}\label{sec:attractor}

Let $F$ be a Lozi-like map with the divider $\Gamma$ and a trapping
region $\Delta$. We will also use other notation introduced earlier.
We want to prove that $F$ restricted to the set $\Lambda :=
\bigcap_{n=0}^\infty F^n(\Delta)$ is topologically transitive, which
implies that $\Lambda$ is the attractor for $F$. Observe that
$\Lambda$ is completely invariant, that is,
$\Lambda=F(\Lambda)=F^{-1}(\Lambda)$.

From now on we will denote the unstable and stable manifold of a map
$f$ at a point $P$ by $W^u_f(P)$ and $W^s_f(P)$ respectively. Also, if
$A$ is an arc, or an arc-component and $P, Q \in A$, $P \ne Q$, we
denote by $[P, Q] \subset A$ a unique arc of $A$ with boundary points
$P$ and $Q$. Those sets will be usually subsets of $W^u_F(X)$,
$W^s_F(X)$, $\Gamma$ or $F(\Gamma)$, and we will call them sometimes
``segments.'' We will call the four regions of the plane given by
$\Gamma$ and $F(\Gamma)$ the \emph{quadrants}, and denote them by
$\Q_i$, $i=1,2,3,4$ (their order is the usual one). We will say that a
point $Q$ is above (below) a point $P$, if the vector from $P$ to $Q$
belongs to the upper (lower) half of the cone $K^s_P$. Analogously,
$Q$ is to the right (left) of $P$ if the vector from $P$ to $Q$
belongs to the right (left) half of the cone $K^u_P$.

Note first that $\Gamma \cap W^s_F(X) \ne \emptyset$. In the opposite
case $W^s_F(X) = W^s_{F_2}(X)$ and hence $W^s_{F_2}(X)$ would also not
intersect $F(\Gamma)$. But, (S3), (S1) and Lemma~\ref{cone3} imply
that $W^s_{F_2}(X)$ and $F(\Gamma)$ intersect in exactly one point, a
contradiction. Let us denote by $M$ the point where $W^s_{F_2}(X)$
intersects $\Gamma$. Note that $M$ is also a point of intersection of
$W^s_{F}(X)$ and $\Gamma$.

Recall that $\Gamma$ and $W^u_{F_2}(X)$ intersect at exactly one
point, denote it by $D$. Let us consider the arc $[D, F(D)] \subset
W^u_{F_2}(X)$, see Figure~\ref{fig.BP1}. Note that $F(D) = F_2(D) \in
F(\Gamma)$ and $X \in [D, F(D)] \subset W^u_F(X)$. Hence $F(D)$
belongs to the right half-plane and since the stretching factor
$\lambda$ is larger than $1$ (see Definition~\ref{df:sh}), $F^2(D)$
belongs to the second quadrant. Therefore $F^3(D)$ lies in the lower
half-plane. In order to prove that $\Lambda$ is the attractor for $F$,
we should restrict the possible position of $F^3(D)$, and increase the
lower bound on the stretching factor as follows:
\begin{enumerate}
\item[$(L3)$] $[F^3(D), F(D)] \subset W^u_F(X)$ intersects $[X, M]
  \subset W^s_F(X)$,
\item[$(L4)$] The stretching factor $\lambda$ is larger than $\sqrt2$.
\end{enumerate}
The above conditions are natural in the sense that any Lozi map with
$a$, $b$ as in~\cite{M} satisfies them.

\begin{figure}[h]
\begin{center}
\begin{tikzpicture}[scale=0.9]
\tikzstyle{every node}=[draw, circle, fill=white, minimum size=1.5pt, 
inner sep=0pt]
\tikzstyle{dot}=[circle, fill=white, minimum size=0pt, inner sep=0pt, outer sep=-1pt]

\node[dot, draw=none, label=above: \tiny $D$] at (0,1.9) {};
\node[dot, draw=none, label=above: \tiny $M$] at (0,-3.5) {};
\node[dot, draw=none, label=above: \tiny $N$] at (1,-2.5) {};
\node[dot, draw=none, label=above: \tiny $F(M)$] at (1,0.6) {};

\node[label=above: \tiny $F^4(D)$] at (-3.73,-1.64) {};
\node[] at (2.33,0.01) {};
\node[label=above: \tiny $F^2(D)$] at (-8.04,3.21) {};
\node[label=above: \tiny $F(D)$] at (6.42,-0.01) {};
\node[label=below: \tiny $F^3(D)$] at (-3.29,-4.02) {};
\node at (5.72,0) {};
\node[] at (0.25,1.16) {};
\node at (4.36,-0.01) {};
\node[label=below: \tiny $F^5(D)$] at (-1.98,-2.1) {};

\node[] at (0.37,-3.46) {};
\node[] at (0.75,-2.35) {};
\node[label=above: \tiny $X$] at (1.9,1) {};
\node[] at (-0.28,1.5) {};
\node[] at (1.55,-0.01) {};

\node[draw=none, label=above: \tiny $\Gamma$] at (-0.2,3) {};
\node[draw=none, label=above: \tiny $F(\Gamma)$] at (-8,0) {};

\draw[
decoration={snake,
	amplitude = 0.1mm,
	segment length = 7mm,
	post length=0.9mm},decorate] (-3.73,-1.64) -- (2.33,0);
\draw[
decoration={snake,
	amplitude = 0.1mm,
	segment length = 7mm,
	post length=0.9mm},decorate] (2.33,0) -- (-8.04,3.21);
\draw[
decoration={snake,
	amplitude = 0.1mm,
	segment length = 7mm,
	post length=0.9mm},decorate] (-8.04,3.21) -- (6.42,0);
\draw[
decoration={snake,
	amplitude = 0.05mm,
	segment length = 7mm,
	post length=0.9mm},decorate] (6.42,0) -- (-3.29,-4.02);
\draw[
decoration={snake,
	amplitude = 0.05mm,
	segment length = 7mm,
	post length=0.9mm},decorate] (-3.29,-4.02) -- (5.72,0);
\draw[
decoration={snake,
	amplitude = 0.03mm,
	segment length = 6mm,
	post length=0.9mm},decorate] (5.72,0) -- (0.25,1.16);
\draw[
decoration={snake,
	amplitude = 0.1mm,
	segment length = 7mm,
	post length=0.9mm},decorate] (0.25,1.16) -- (4.36,-0.01);
\draw[
decoration={snake,
	amplitude = 0.1mm,
	segment length = 7mm,
	post length=0.9mm},decorate] (4.36,-0.01) -- (-1.98,-2.1);
\draw[
decoration={snake,
	amplitude = 0.2mm,
	segment length = 15mm,
	post length=0.9mm},decorate] (-8.5,0) -- (7,0);
\draw[
decoration={snake,
	amplitude = 0.2mm,
	segment length = 10mm,
	post length=0.9mm},decorate](-0.5,3.3) -- (0.5,-4.3);
\draw[
decoration={snake,
	amplitude = 0.2mm,
	segment length = 10mm,
	post length=0.9mm},decorate](1.9,1) -- (0.37,-3.46);
\end{tikzpicture}
\caption{Positions of some distinguished points.}
\label{fig.BP1}
\end{center}
\end{figure}
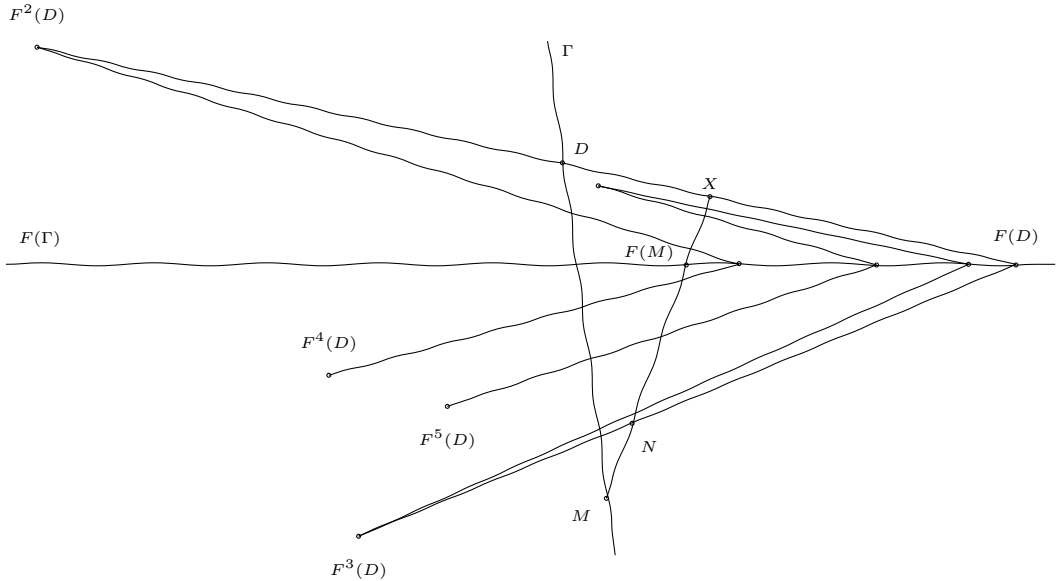

Let us denote by $N$ the point of intersection of $[F^3(D), F(D)]$ and
$[X, M]$. Let $\tilde H$ denote the ``quadrangle'' with vertices $D$,
$F(D)$, $N$, $M$, and edges $[D, F(D)]$, $[F(D), N] \subset W^u_F(X)$,
$[N, M] \subset W^s_F(X)$, and $[D, M] \subset \Gamma$. The set
$\tilde H$ is the union of two ``triangles.'' One of them has vertices
$X$, $N$, $F(D)$, and edges $[X, F(D)], [F(D), N] \subset W^u_F(X)$
and $[X, N] \subset W^s_F(X)$, and we denote it by $H_1$. The other
one has vertices $X$, $D$, $M$ and corresponding edges, see
Figure~\ref{fig.BP1}.

We will say that a smooth curve goes in the direction of the cone
$K^s$ or $K^u$ if vectors tangent to that curve are contained in the
corresponding cone.

\begin{lem}\label{lem:124}
If a Lozi-like map $F$ satisfies $(L3)$, then the intersection of
$\Lambda$ with the right half-plane is contained in $\tilde H$, and
the intersection of $\Lambda$ with the upper half-plane is contained
in $F(\tilde H)$.
\end{lem}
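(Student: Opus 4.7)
The second statement follows from the first once it is established. Indeed, $\Lambda$ is $F$-invariant and, by Remark~\ref{newrem}, $F$ maps the right half-plane onto the upper half-plane; so every $Q\in\Lambda\cap(\text{upper})$ is of the form $F(P)$ for $P=F^{-1}(Q)\in\Lambda\cap R$, and once we know $P\in\tilde H$, we conclude $Q\in F(\tilde H)$. Thus the main work is the first statement, and I concentrate on it.

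Take $P\in\Lambda\cap R$, with the aim of showing $P\in\tilde H$. The region $\tilde H\subset\bar R$ is a topological disk bounded by the Jordan curve formed by the four arcs $[D,F(D)]$, $[F(D),N]$, $[N,M]$ and $[M,D]$. The first two are in $W^u_F(X)$, the third is in $W^s_F(X)$, and the fourth is in $\Gamma$. The plan is to use the invariant manifold structure of the attractor to show that $\Lambda\cap R$ cannot reach outside of $\tilde H$.

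The first key observation I would make is that $W^u_F(X)\subset\Lambda$: for any $Q\in W^u_F(X)$, we have $F^{-n}(Q)\to X\in\Delta$, so $F^{-n}(Q)\in\Delta$ for all sufficiently large $n$, and then by forward invariance of $\Delta$ the inclusion holds for all $n\geq 0$ as well. In particular, the edges $[D,F(D)]$ and $[F(D),N]$ of $\partial\tilde H$ that lie on $W^u_F(X)$ are subsets of $\Lambda$. The same argument applied to any point on $W^u_F(P)$ shows $W^u_F(P)\subset\Lambda$, so $W^u_F(P)$ is a smooth curve in the $K^u$-direction contained in the bounded set $\Delta$.

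Now suppose for contradiction that $P\notin\tilde H$. I would split into cases according to which edge of $\partial\tilde H$ separates $P$ from $\tilde H$. If $P$ lies on the ``wrong side'' of one of the unstable edges $[D,F(D)]$ or $[F(D),N]$, then $W^u_F(P)$ is an unstable leaf distinct from $W^u_F(X)$, hence disjoint from it as leaves of the unstable foliation; being a $K^u$-direction curve in the bounded set $\Delta$, $W^u_F(P)$ must then fold, and its folding must be trapped between arcs of $W^u_F(X)$, forcing $P$ back inside $\tilde H$. If $P$ lies on the ``wrong side'' of $[N,M]\subset W^s_F(X)$, i.e.\ in $K^s$-direction beyond the stable segment, then under backward iteration $F^{-1}$ expands the $K^s$-displacement of $P$ from the stable manifold by factor $\lambda>1$, eventually pushing $F^{-n}(P)$ out of $\Delta$ and contradicting $P\in\Lambda$. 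The remaining edge $[M,D]\subset\Gamma$ is simply part of the boundary of $R$, so ``wrong side'' there means outside $R$ altogether and is excluded. Condition (L3) is used exactly to guarantee that the homoclinic point $N$ exists and closes the region on its lower-right side, so that these three cases are exhaustive.

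The main obstacle is the geometric case analysis in the ``unstable'' cases, where I need to argue rigorously that the disjointness of $W^u_F(P)$ from $W^u_F(X)$ combined with the boundedness of both curves inside $\Delta$ prevents $W^u_F(P)$ from reaching a point of $R$ outside $\tilde H$. This requires carefully exploiting the $K^u$-cone condition, the way $W^u_F(P)$ folds at pre-images of $\Gamma$, and Lemma~\ref{cone2}, which forces $K^u$-direction curves to be Lipschitz graphs and hence pins down how they can be nested.
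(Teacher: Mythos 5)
The reduction of the second statement to the first is fine, but the proof of the first statement has a genuine gap: the two ``unstable'' cases, which are the heart of the lemma, are only sketched, and you acknowledge this yourself as ``the main obstacle.'' The assertion that $W^u_F(P)$, being disjoint from $W^u_F(X)$ and trapped in $\Delta$, ``must fold \dots forcing $P$ back inside $\tilde H$'' is essentially a restatement of what has to be proved, and no mechanism for proving it is given: for a (typically non-periodic) point $P$ of the piecewise-smooth map $F$, the unstable set of $P$ is only a union of smooth arcs folding at preimages of $\Gamma$ (obtained from the non-autonomous Hadamard--Perron theorem, as in the proof of Lemma~\ref{lem:unique}), not a leaf of a smooth foliation, and no nesting or trapping property of such folded curves relative to $W^u_F(X)$ is established; even the claim that a point of the right half-plane outside $\tilde H$ must be ``separated'' from $\tilde H$ by exactly one of the four edges is not justified.

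In the one case you do argue, the hyperbolicity is used in the wrong direction: by (S2), $DF^{-1}$ expands vectors of $K^s$ and contracts vectors of $K^u$, so the displacement of $P$ transverse to the stable arc $[N,M]$ (a displacement in the unstable direction) is contracted, not expanded, under backward iteration. What grows under $F^{-n}$ is the displacement from the \emph{unstable} manifold, and even that is controlled only while the backward orbit stays in one half-plane, where $F^{-1}$ coincides with a single $F_i^{-1}$; ensuring this is exactly the missing ingredient. The paper's proof supplies it: it first shows $F^{-1}(\Q_1\setminus\tilde H)\subset\Q_1\setminus\tilde H$ and $F^{-2}(\Q_4\setminus\tilde H)\subset\Q_1\setminus\tilde H$ (the latter is where (L3) enters substantively, not merely through the existence of $N$), so the backward orbit of a hypothetical $P\in\Lambda$ in the right half-plane outside $\tilde H$ stays in the right half-plane and is therefore an $F_2^{-1}$-orbit; boundedness and complete invariance of $\Lambda$ then force $F^{-2}(P)\in W^u_{F_2}(X)$, and since two consecutive backward images lie in the right half-plane, $F^{-2}(P)\in[D,F(D)]\subset\tilde H$, a contradiction. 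Your proposal contains no substitute for these two inclusions, and without them neither your ``unstable'' nor your ``stable'' case can be closed.
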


\begin{proof}
Let us show first that
\begin{equation}\label{e:124a}
F^{-1}(\Q_1\setminus\tilde H)\subset \Q_1\setminus\tilde H.
\end{equation}
To see that, observe where $F^{-1}$ maps the three pieces of the
boundary of $\Q_1\setminus\tilde H$. The part of $F(\Gamma)$ to the
right of $F(D)$ is mapped to the part of $\Gamma$ above $D$. The edge
$[D, F(D)]$ is mapped to its subset $[D,C]$, where $C=F^{-1}(D)\in
[X,F(D)]$. The part of $\Gamma$ above $D$ is mapped to a curve going
up from $C$ in the direction of the cone $K^s$ (and this curve cannot
intersect $\Gamma$). The set bounded by those images is contained in
$\Q_1\setminus\tilde H$, so~\eqref{e:124a} holds.

Now we show that
\begin{equation}\label{e:124b}
F^{-2}(\Q_4\setminus\tilde H)\subset \Q_1\setminus\tilde H.
\end{equation}
To see that, observe where $F^2$ maps the three pieces of the boundary
of $\Q_1\setminus\tilde H$. The part of $F(\Gamma)$ to the right of
$F(D)$ is mapped to a curve going to the left from $F^3(D)$ in the
direction of the cone $K^u$. The edge $[D, F(D)]$ is mapped to
$[F^2(D),F(D)]\cup[F(D),F^3(D)]$. The part of $\Gamma$ above $D$ is
mapped to a curve going to the left from $F^2(D)$ in the direction of
the cone $K^u$. The set $F^2(\Q_1\setminus\tilde H)$ lies in the right
part of the plane divided by those three curves. Because of the
condition (L3) and the form of $K^u$, this set contains the right
half-plane minus $\tilde H$. This proves~\eqref{e:124b}.

Suppose that a point $P$ is contained in the right half-plane and in
$\Lambda$, but not in $\tilde H$. Look at its trajectory for $F^{-1}$.
By~\eqref{e:124a} and~\eqref{e:124b}, $F^{-n}(P)$ is in the right
half-plane for all $n\ge 2$. Thus, the trajectory of $F^{-2}(P)$ for
$F^{-1}$ is the same as for $F_2^{-1}$. Since $\Lambda$ is bounded and
completely invariant, this trajectory must be bounded, so $F^{-2}(P)$
belongs to the unstable manifold of $X$ for the map $F_2$. Taking into
account that both $F^{-2}(P)$ and $F^{-3}(P)$ belong to the right
half-plane, we get $F^{-2}(P)\in[D,F(D)]$. However, $[D,F(D)]\subset
\tilde H$, and we get a contradiction. This proves that the
intersection of $\Lambda$ with the right half-plane is contained in
$\tilde H$.

Applying $F$ to both sides of this inclusion, and taking into account
that $\Lambda$ is completely invariant, we see that the intersection
of $\Lambda$ with the upper half-plane is contained in $F(\tilde H)$.
\end{proof}

Let $H := \bigcup_{n = 0}^{\infty}F^n(H_1)$.

\begin{lem}\label{lem:H}
Let a Lozi-like map $F$ satisfy $(L3)$. Then $\Lambda = \bigcap_{n =
  0}^{\infty}F^n(H)$.
\end{lem}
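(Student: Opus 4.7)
The plan is to prove the two inclusions $\Lambda \subset \bigcap_{n \geq 0} F^n(H)$ and $\bigcap_{n \geq 0} F^n(H) \subset \Lambda$ separately.

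Because $\Lambda$ is completely invariant, $\Lambda = F^n(\Lambda)$ for every $n \geq 0$, so for the first inclusion it is enough to prove $\Lambda \subset H$. The tool is Lemma~\ref{lem:124}, which puts $\Lambda \cap (\text{right half-plane}) \subset \tilde H = H_1 \cup H_2$. Since $H_1 \subset H$ trivially, the work reduces to showing $\Lambda \cap H_2 \subset H$; the portions of $\Lambda$ in the upper, lower, and left half-planes are then obtained by applying $F^k$ to this containment and invoking $\Lambda = F(\Lambda)$ together with Remark~\ref{newrem}. The key geometric observation is that, because $\lambda_1 < 0$ and $|\lambda_1| > 1$ at $X$ by~(P3), the point $F^2(D)$ lies beyond $D$ on the same side of $X$ on $W^u_F(X)$; hence the unstable edge $[X, F^2(D)]$ of $F(H_1)$ strictly contains the unstable edge $[X, D]$ of $H_2$. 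This inclusion of unstable arcs, combined with the local product structure of the hyperbolic attractor $\Lambda$ transverse to $W^s_F$, is used to place $\Lambda \cap H_2$ inside forward iterates $F^k(H_1) \subset H$.

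For the second inclusion, we show $H \subset \overline\Delta$, from which $\bigcap_{n \geq 0} F^n(H) \subset \bigcap_{n \geq 0} F^n(\overline\Delta) = \Lambda$ follows, the last equality using $F(\overline\Delta) \subset \Delta \subset \overline\Delta$ from~(L2). The boundary of $H_1$ consists of arcs of $W^u_F(X)$ (contained in $\overline\Delta$ because $W^u_F(X)$ lies in the basin of $\Lambda$) together with the arc $[X,N] \subset W^s_F(X)$, whose endpoints $X$ and $N$ both lie in $\overline\Delta$. Enlarging $\Delta$ if necessary (which does not change the attractor $\Lambda$), we may assume $H_1 \subset \overline\Delta$. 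Then forward invariance $F(\overline\Delta) \subset \overline\Delta$ gives $F^n(H_1) \subset \overline\Delta$ for every $n \geq 0$, and hence $H = \bigcup_{n \geq 0} F^n(H_1) \subset \overline\Delta$.

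The main obstacle is the containment $\Lambda \cap H_2 \subset H$. Although the unstable edge $[X,D]$ of $H_2$ is a subarc of the unstable edge $[X, F^2(D)]$ of $F(H_1)$, the stable edge $[X,M]$ of $H_2$ is strictly longer than the stable edge $[X, F(N)]$ of $F(H_1)$ (since $\lambda_2 < 1$ contracts in the stable direction), so $H_2 \not\subset F(H_1)$ as sets in $\R^2$. The argument must therefore invoke the thin, unstable-leaf structure of $\Lambda$ transverse to $W^s_F(X)$ to conclude that the \emph{intersection} $\Lambda \cap H_2$ (not $H_2$ itself) is captured by forward iterates of $H_1$, by iterating the inclusion $[X,D] \subset F(H_1)$ and exploiting the expansion of unstable arcs under $F$ to reach points of $\Lambda$ deeper inside $H_2$.
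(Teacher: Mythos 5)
Your reduction of the hard inclusion to ``$\Lambda \cap H_2 \subset H$'' is where the proof stops being a proof. The sentence ``the argument must therefore invoke the thin, unstable-leaf structure of $\Lambda$ transverse to $W^s_F(X)$'' names the difficulty but does not resolve it: no local product structure for $\Lambda$ is available at this stage (the structure theory of $\Lambda$ --- $\Lambda=\Cl(W^u_F(X))$, transitivity --- is developed \emph{after} and \emph{from} this lemma, so appealing to it is circular), and iterating the arc inclusion $[X,D]\subset F(H_1)$ only reaches points of $W^u_F(X)$, not arbitrary points of $\Lambda\cap H_2$. A second, independent gap is your claim that the portions of $\Lambda$ in the lower and left half-planes follow ``by applying $F^k$'' and invariance: this fails exactly for points of $\Lambda$ whose entire backward orbit stays in the third quadrant, since such points are never forward images of points already known to lie in $H$. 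The paper's proof handles both issues at once with a backward-orbit dichotomy: if $P\in\Lambda$ lies in the lower half-plane but not in $H$, then $F^{-n}(P)\nin H$ for all $n\ge 1$; either the backward orbit remains forever in the left half-plane, in which case Lemma~\ref{lem:124} (plus $H\cap\Q_2=F(\tilde H)\cap\Q_2$) forces it into $\Q_3$, so it is a bounded backward orbit of $F_1$, hence lies on $W^u_{F_1}(Y)$, giving $Y\in\Lambda\subset\Delta$ and contradicting the uniqueness of the fixed point in $\Delta$ ((P2), (P4)); or some $F^{-k}(P)$ enters the right half-plane, hence lies in $\tilde H$ by Lemma~\ref{lem:124}, and then $F^{-k+1}(P)\in F(\tilde H)\subset H_1\cup F(H_1)\subset H$, a contradiction. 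Neither the role of the second fixed point $Y$ nor the inclusion $F(\tilde H)\subset H$ used this way appears in your proposal, and these are the actual content of the hard direction.

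The easy inclusion is essentially right but sloppier than it needs to be. ``Enlarging $\Delta$ if necessary'' is not justified: $\Lambda$ is defined as $\bigcap_n F^n(\Delta)$ for the trapping region given by (L2), and replacing $\Delta$ by a larger trapping region could a priori enlarge this intersection; also, knowing that the endpoints $X$ and $N$ of the stable edge lie in $\Cl\Delta$ does not place the arc $[X,N]$ there. Neither device is needed: $W^u_F(X)\subset\Lambda\subset\Delta$ (as in the proof of Proposition~\ref{prop:closure}), the forward iterates of $[X,N]\subset W^s_F(X)$ converge to $X$, so $F^k(\partial H_1)\subset\Delta$ for some $k$, and since $\Delta$ is an open disk this gives $F^k(H_1)\subset\Delta$; hence $F^n(H)\subset\Delta$ for all large $n$ and $\bigcap_n F^n(H)\subseteq\bigcap_n F^n(\Delta)=\Lambda$, which is the paper's one-line observation that $\partial H\subset[X,N]\cup W^u_F(X)$ and $F(H)\subset H\subset\Delta$.
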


\begin{proof}
The definition of $H$ implies that $\partial H \subset [X, N] \cup
W^u_F(X)$ and $F(H) \subset H \subset \Delta$. Therefore, $\bigcap_{n
  = 0}^{\infty}F^n(H) \subseteq \bigcap_{n = 0}^{\infty} F^n(\Delta) =
\Lambda$.

Let us prove the reverse inclusion. Since $\Lambda$ is completely
invariant, it is sufficient to prove that $\Lambda \subset H$, and by
Lemma~\ref{lem:124} we only need to check the union of the third
quadrant and the set $(\tilde H \setminus H_1) \cap \Q_4$ (observe
that $F(\tilde H\subset H_1\cup F(H_1)\subset H$).

Suppose, by contradiction, that a point $P$ is contained in $\Lambda$
and in the lower half-plane, but not in $H$. Consider its trajectory
for $F^{-1}$. By the definition of $H$, $F^{-n}(P) \nin H$ for all $n
\ge 1$. Recall that $F^{-1}$ maps the lower half-plane onto the left
one. Assume that $F^{-n}(P)$ belongs to the left half-plane for all $n
\ge 1$. Since $H \cap \Q_2 = F(\tilde H) \cap \Q_2$,
Lemma~\ref{lem:124} implies that $F^{-n}(P)$ is in the third quadrant
for all $n \ge 1$. Then $F^{-n}(P) = F_1^{-n}(P)$ for all $n \ge 1$.
Since $\Lambda$ is bounded and completely invariant, this trajectory
is bounded, and hence belongs to the unstable manifold of the fixed
point $Y$ for the map $F_1$. Since $\Lambda$ is also closed, $Y \in
\Lambda$, a contradiction. Therefore, there exists $k \in \N$ such
that $F^{-k}(P)$ belongs to the right half-plane. Since $\Lambda$ is
completely invariant, Lemma~\ref{lem:124} implies that $F^{-k}(P) \in
\tilde H$ and hence $F^{-k+1}(P) \in F(\tilde H) \subset H$, a
contradiction.

This proves that $\Lambda$ is contained in $H$ and completes the
proof.
\end{proof}

\begin{rem}\label{rem:polygon}
\begin{enumerate}
\item[(1)] Note that the above proof implies that $H$ is a
  ``polygon.'' Namely, $H$ is the union of $H_1$ and ``triangles''
  $H_n$ defined inductively as $H_n := \Cl(F(H_{n-1}) \setminus H_1)$
  for $n \ge 2$. From the above proof it follows that there exists $m
  \ge 4$ such that $H_n = \emptyset$ for all $n \ge m$.
\item[(2)] The above proof also implies that $M$ lies below all points
  of $\Lambda \cap \Gamma$. Therefore, all points of $\Lambda \cap
  F(\Gamma)$ lie to the right of $F(M)$.
\end{enumerate}
\end{rem}

\begin{prop}\label{prop:closure}
Let a Lozi-like map $F$ satisfy $(L3)$. Then $\Lambda = \Cl
(W^u_F(X))$.
\end{prop}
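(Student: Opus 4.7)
The plan is to prove the two inclusions separately.

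\textbf{Forward: $\Cl(W^u_F(X)) \subseteq \Lambda$.} Since $\Lambda$ is closed, it suffices to show $W^u_F(X) \subseteq \Lambda$. Fix $P \in W^u_F(X)$, so $F^{-n}(P) \to X$ as $n \to \infty$. Since $\Delta$ is an open neighborhood of $X$, one has $F^{-n}(P) \in \Delta$ for every sufficiently large $n$. The trapping condition $F(\Cl \Delta) \subset \Delta$ then propagates this inclusion downward: if $F^{-N}(P) \in \Delta$ then $F^{-N+1}(P) = F(F^{-N}(P)) \in \Delta$, so iterating gives $F^{-n}(P) \in \Delta$ for every $n \ge 0$. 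Equivalently, $P \in F^n(\Delta)$ for all $n$, whence $P \in \Lambda$.

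\textbf{Reverse: $\Lambda \subseteq \Cl(W^u_F(X))$.} By Lemma~\ref{lem:H} and $H = \bigcup_{j \ge 0} F^j(H_1)$ one has $F^n(H) = \bigcup_{j \ge 0} F^{n+j}(H_1)$. So for any $P \in \Lambda$ and any $n \ge 0$ there is $k \ge n$ with $P \in F^k(H_1)$, and we extract a sequence $k_n \to \infty$ with $P \in F^{k_n}(H_1)$. The set $F^k(H_1)$ is bounded by an unstable arc $\alpha_k \subset W^u_F(X)$ (the image of $[X,F(D)] \cup [F(D),N]$) and a stable arc $\sigma_k = F^k([X,N]) \subset W^s_F(X)$. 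Because $W^s_F(X)$ is $F$-invariant, tangent vectors of $\sigma_k$ lie in $K^s$, and applying the $K^s$-expansion of $F^{-1}$ from (S2) iteratively yields $\mathrm{length}(\sigma_k) \le \lambda^{-k}\,\mathrm{length}([X,N])$.

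The crux is to show that every $Q \in F^k(H_1)$ satisfies $d(Q, W^u_F(X)) \le C\lambda^{-k}$ for a uniform constant $C$. The idea is that $F^k(H_1)$ is a lune bounded by one curve in the $K^u$-direction (the arc $\alpha_k$, whose smooth pieces are Lipschitz graphs of $y=f(x)$ by Lemma~\ref{cone2}) and one short curve in the $K^s$-direction ($\sigma_k$, a Lipschitz graph of $x=g(y)$). The disjointness of the universal cones $K^u$ and $K^s$ constrains this lune to have $K^s$-width bounded by a uniform multiple of $\mathrm{length}(\sigma_k) = O(\lambda^{-k})$. Since $\sigma_k$ itself has both endpoints within $O(\lambda^{-k})$ of $X \in W^u_F(X)$, one concludes that $F^k(H_1)$ is contained in the $C\lambda^{-k}$-neighborhood of $W^u_F(X)$. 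Taking $k = k_n \to \infty$ yields $P \in \Cl(W^u_F(X))$. The main obstacle is making the $K^s$-width estimate rigorous in the presence of the corners of $\alpha_k$ at its intersections with $\Gamma$; this requires tracking a $K^s$-directed segment issued from $Q$ as it exits the lune through either $\alpha_k$ or $\sigma_k$, using the Lipschitz constant $\sqrt{1-\ell^2}/\ell$ from Lemma~\ref{cone2} to bound its length by a uniform multiple of $\mathrm{length}(\sigma_k)$.
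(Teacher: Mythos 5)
Your forward inclusion is fine and is essentially the paper's argument ($X\in\Delta$, $\Delta$ open, $F(\Cl\Delta)\subset\Delta$, $\Lambda$ closed). The reverse inclusion, however, has a genuine gap at exactly the step you call the crux. The claim that ``the disjointness of the universal cones constrains the lune to have $K^s$-width bounded by a uniform multiple of $\mathrm{length}(\sigma_k)$'' is false as a statement about cone geometry. The unstable boundary $\alpha_k=F^k([X,F(D)]\cup[F(D),N])$ is only \emph{piecewise} $K^u$-directed: at the images of the turning points it has corners where it folds back, reversing horizontal direction. Two $K^u$-directed strands issuing from the two endpoints of the tiny stable arc $\sigma_k$ can then separate at a linear rate and enclose a region of diameter comparable to $\operatorname{diam}\Delta$. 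Concretely, with $K^u$ the cone of slopes in $[-1,1]$ and $K^s$ vertical, one can draw a simple closed curve consisting of five segments of slope in $[-1,1]$ (with folds) plus one vertical segment of length $\delta$, bounding a Jordan domain that contains a disk of radius $1$, for every $\delta>0$. So a $K^s$-segment dropped from an interior point $Q$ does exit through the $K^u$ part of the boundary, but nothing bounds its length by $C\,\mathrm{length}(\sigma_k)$; you only get $d(Q,W^u_F(X))\le\operatorname{diam}F^k(H_1)$, which is useless. In short, hyperbolicity and the cone structure of $\partial F^k(H_1)$ alone do not force $F^k(H_1)$ to be thin.

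What actually rules out a fat piece of $F^n(H)$ is dissipativity, i.e.\ condition (L1), which your proof never uses. On the compact set $\Cl H\subset\Cl\Delta$ one has $|\det DF|\le b<1$, hence $\mathrm{meas}(F^n(H))\le b^n\,\mathrm{meas}(H)\to 0$. This is the paper's route: if $P\in\Lambda$ had $d(P,W^u_F(X))\ge 2\eps$, then since $\partial F^n(H)\subset W^u_F(X)\cup F^n([X,N])$ and $F^n([X,N])\subset W^s_F(X)$ shrinks to $X$ (here your correct $\lambda^{-n}$ estimate on $\sigma_n$ is exactly what is needed), the ball $B(P,\eps)$ misses $\partial F^n(H)$ for large $n$; as $P\in\bigcap_n F^n(H)$ by Lemma~\ref{lem:H}, connectedness gives $B(P,\eps)\subset F^n(H)$, so $\mathrm{meas}(F^n(H))\ge\pi\eps^2$, contradicting the area decay. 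Replacing your width estimate by this measure argument repairs the proof; as written, the reverse inclusion does not go through.
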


\begin{proof}
Since $X \in \Delta$, $F(\Delta) \subset \Delta$, $\Delta$ is open and
$\Lambda = \bigcap_{n = 0}^{\infty}F^n(\Delta)$ is closed, we have
$\Cl (W^u_F(X)) \subseteq \Lambda$.

Let us prove the reverse inclusion. Let us suppose, by contradiction,
that there is a point $P\in\Lambda\setminus \Cl (W^u_F(X))$. Then
there exists $\eps > 0$ such that a ball with center $P$ and radius
$2\eps$ is disjoint from $W^u_F(X)$. Since $\partial H \subset [X, N]
\cup W^u_F(X)$ and $[X, N] \subset W^s_F(X)$, a ball with center $P$
and radius $\eps$ is disjoint from $\partial F^n(H)$, for all $n$
sufficiently large. Since the absolute value of the Jacobian of $F$ is
less than 1, the Lebesgue measures of $F^n(H)$ converge to $0$ as $n
\to \infty$. Therefore, $P \nin F^n(H)$ for $n$ sufficiently large.
Consequently, $P \nin \bigcap_{n = 0}^{\infty}F^n(H)$, contradicting
Lemma~\ref{lem:H}. This proves that $\Cl (W^u_F(X)) = \Lambda$.
\end{proof}

\begin{lem}\label{lem:intersect}
Let a Lozi-like map $F$ satisfy $(L3)$ and $(L4)$. Let $A \subset
W^u_F(X)$ be an arc. Then there exists $n \ge 0$ and a smooth arc
$\hat A \subset F^n(A)$ such that $\hat A$ intersects both $\Gamma$
and $F(\Gamma)$.
\end{lem}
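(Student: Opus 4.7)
The plan is to exploit the exponential arc-length growth of $F^n(A)$, together with the bounded geometry of $\Lambda$, to produce a smooth subarc with one endpoint at a corner of $F^n(A)$ on $F(\Gamma)$ and interior crossing $\Gamma$.

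First, by (S1) and induction, the tangent vectors of $F^n(A)$ remain inside the universal cone $K^u$ wherever they are defined. Each maximal smooth subarc of $F^n(A)$, having continuous tangent in $K^u$, is therefore strictly monotone in $x$ (since $|x'|\ge\ell_u>0$ forces a constant sign of $x'$ by continuity), and by Lemma~\ref{cone2} its arc length is at most $W/\ell_u$, where $W$ is the $x$-diameter of $\Lambda$ (finite because $\Lambda$ is bounded). On the other hand, by (S2) the total arc length of $F^n(A)$ is at least $\lambda^n|A|$, which by (L4) tends to infinity. Consequently, the number of maximal smooth subarcs of $F^n(A)$, and hence the number of corners (non-smoothness points) of $F^n(A)$, grows without bound with $n$.

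Each corner of $F^n(A)$ is the image $F^n(P)$ of some $P\in A$ whose forward orbit $P,F(P),\dots,F^{n-1}(P)$ meets $\Gamma$, so the total corner count equals $\sum_{j=0}^{n-1}|F^j(A)\cap\Gamma|$. After discarding the countably many exceptional intersections in $\Gamma\cap\bigcup_{k\ge1}F^k(\Gamma)$, every intersection of $F^j(A)$ with $\Gamma$ is transverse and occurs in the interior of some smooth subarc of $F^j(A)$. So for some $j$ there is a smooth subarc $B\subset F^j(A)$ crossing $\Gamma$ transversally at an interior point $Q$; then $F(B)\subset F^{j+1}(A)$ acquires a corner at $F(Q)\in F(\Gamma)$, with two adjacent smooth subarcs emanating from $F(Q)$ in the $K^u$ direction, one into each of the upper and lower half-planes.

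The main obstacle is to ensure that one of these adjacent smooth subarcs, or one appearing at a further iterate, actually extends across $\Gamma$ rather than being cut off by yet another corner in between. To overcome it, one iterates further. By Lemma~\ref{lem:124} and Remark~\ref{rem:polygon}(2), all corners of $F^n(A)$ lying on $F(\Gamma)$ are confined to the bounded arc $\Lambda\cap F(\Gamma)$ situated to the right of $F(M)$; the number of such corners, namely $|F^{n-1}(A)\cap\Gamma|$, grows without bound, so the corners accumulate within this bounded arc. Combined with the expansion rate $\lambda>\sqrt{2}$ from (L4), which stretches the adjacent smooth subarcs in the $K^u$ direction, a pigeonhole/compactness argument produces, for some $n$, a corner $R$ of $F^n(A)$ on $F(\Gamma)$ whose adjacent smooth subarc $\hat A\subset F^n(A)$ spans across $\Gamma$. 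This $\hat A$ meets $F(\Gamma)$ at $R$ and $\Gamma$ at the interior crossing, as required.
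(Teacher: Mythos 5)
Your preparatory steps are correct (tangents of $F^n(A)$ stay in $K^u$, each maximal smooth subarc is a Lipschitz graph over a bounded $x$-interval and hence has length at most some uniform $z$, and the total length grows like $\lambda^n$, so the number of maximal smooth pieces is unbounded), but the final step is exactly where the content of the lemma lies, and you have not supplied an argument for it. You name the obstacle yourself --- the smooth subarc adjacent to a corner on $F(\Gamma)$ may be terminated by another corner before it ever reaches $\Gamma$ --- and then dispose of it by appeal to an unspecified ``pigeonhole/compactness argument.'' Nothing you have established excludes the bad scenario: corners can accumulate on the bounded arc of $\Lambda\cap F(\Gamma)$ to the right of $F(M)$ while every smooth piece adjacent to such a corner ends at yet another corner (lying on some $F^m(\Gamma)$, $m\ge 1$) before crossing $\Gamma$; accumulation of corners and expansion in the $K^u$ direction are both compatible with that picture, so (L4) is invoked only decoratively in your sketch. (A secondary issue: from the growth of the cumulative corner count $\sum_{j<n}|F^j(A)\cap\Gamma|$ you cannot conclude, as you assert, that $|F^{n-1}(A)\cap\Gamma|$ itself grows without bound.)

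The paper closes precisely this gap by a counting argument run by contradiction, and this is where $\lambda>\sqrt2$ genuinely enters. Assume no smooth subarc of any iterate meets both $\Gamma$ and $F(\Gamma)$. A maximal smooth arc $I$ meets $\Gamma$ at most once (tangents in $K^u$ versus $\Gamma$ in the $K^s$ direction), so $F(I)$ consists of at most two maximal smooth arcs, each containing a point of $F(\Gamma)$; by the standing assumption none of these meets $\Gamma$, so $F^2(I)$ still consists of at most two maximal smooth arcs, and the same bound holds trivially when $I$ misses $\Gamma$. Hence $F^{2k}(A)$ has at most $2^k$ maximal smooth pieces, so its length is at most $2^k z$, while by (S2) it is at least $\lambda^{2k}$ times the length of $A$; for large $k$ this forces $\lambda^2\le 2$, contradicting (L4). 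Your constructive route would need a substitute for this ``at most doubling every two iterates'' bookkeeping; as written, the proposal is incomplete at its decisive point.
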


\begin{proof}
Assume that such $n$ and $\hat A$ do not exist. The set $F^k(A)$ is an
arc which need not be smooth (that is, of class $C^1$), but is a union
of smooth arcs. We say that an arc $I \subseteq F^k(A)$ is
\emph{maximal smooth} if $I$ is smooth and no subarc of $F^k(A)$
containing $I$ (except $I$ itself) is smooth. If a maximal smooth arc
$I \subseteq F^k(A)$ intersects $\Gamma$, then $F(I)$ is a union of at
most two maximal smooth arcs, each of them intersecting $F(\Gamma)$.
By assumption $F(I)$ does not intersect $\Gamma$, and consequently
$F^2(I)$ consists of at most two maximal smooth arcs. If $I$ does not
intersect $\Gamma$, then $F(I)$ is a smooth arc and $F^2(I)$ consists
of at most two maximal smooth arcs. Hence, in both cases, $F^2(I)$
consists of at most two maximal smooth arcs. Thus, $F^{2k}(A)$
consists of at most $2^k$ maximal smooth arcs.

Since $\Lambda$ is bounded, there exists $z \in \R$ such that for
every smooth arc $I \subset W^u_F(X)$, the length of $I$ is smaller
than $z$. Therefore, the length of $F^{2k}(A)$ is not larger than $2^k
z$. On the other hand, $A \subset W^u_F(X)$, and hence the length of
$F^{2k}(A)$ is at least $\lambda^{2k}$ times the length of $A$. Thus,
for $k$ large enough we get $\lambda^2 \le 2$, which contradicts~(L4).
\end{proof}

\begin{rem}\label{rem:XM}
Since $M$ lies below $N$, each smooth arc contained in $H$ and
intersecting $\Gamma$ and $F(\Gamma)$ intersects $[X, M] \subset
W^s_F(X)$.
\end{rem}

\begin{prop}\label{prop:transitivity}
Let a Lozi-like map $F$ satisfy $(L3)$ and $(L4)$. Then $F|_{\Lambda}$
is topologically mixing, i.e., for all open subsets $U$, $V$ of $\R^2$
such that $U \cap \Lambda \ne \emptyset$ and $V \cap \Lambda \ne
\emptyset$, there exists $N \in \N$ such that for every $n \ge N$ the
set $F^n(U) \cap V \cap \Lambda$ is nonempty.
\end{prop}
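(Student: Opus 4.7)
The plan is to combine Proposition~\ref{prop:closure}, which gives $\Lambda=\Cl(W^u_F(X))$, with a lambda-lemma-type argument built on the hyperbolicity of (S1)--(S2), using Lemma~\ref{lem:intersect} and Remark~\ref{rem:XM} to supply a transversal crossing of $W^s_F(X)$. Since $U$ and $V$ are open and both meet $\Lambda$, they both meet $W^u_F(X)$. I pick a nondegenerate arc $A\subset U\cap W^u_F(X)$ and a point $P\in V\cap W^u_F(X)$. By definition of the unstable manifold $F^{-j}(P)\to X$ as $j\to\infty$, so for any prescribed neighborhood of $X$ I can choose $n_1$ with $y:=F^{-n_1}(P)$ lying in it; since $F^{-n_1}$ is a homeomorphism, $V':=F^{-n_1}(V)$ is an open neighborhood of $y$.

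Next, Lemma~\ref{lem:intersect} applied to $A$ gives $n_0\ge 0$ and a smooth arc $\hat A\subset F^{n_0}(A)$ that meets both $\Gamma$ and $F(\Gamma)$, and by Remark~\ref{rem:XM} this arc meets $[X,M]\subset W^s_F(X)$ at some point $Q$; the crossing is transversal because tangents of $\hat A$ lie in the unstable cone field $\C^u$ while tangents of $[X,M]$ lie in the stable cone field $\C^s$ and these cones are disjoint. The key step is then a lambda-lemma-type assertion: for every compact arc $J\subset W^u_F(X)$ through $X$ and every $\varepsilon>0$, there exists $K$ such that for every $k\ge K$, $F^k(\hat A)$ contains a smooth subarc within Hausdorff distance $\varepsilon$ of $J$. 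This rests on (a) $F^k(Q)\to X$ with the exponential rate from (S2); (b) since $X\notin \Gamma\cup F(\Gamma)$ by (P5), for $k$ large enough a small piece of $F^k(\hat A)$ around $F^k(Q)$ avoids $\Gamma$ and stays smooth; (c) the expansion in $K^u$ and contraction in $K^s$ from (S2) force this piece to converge in $C^1$ to $W^u_F(X)$ near $X$, as in the classical inclination lemma for the smooth diffeomorphism $F_2$ (which equals $F$ on a neighborhood of $X$), and writing $J=F^m(J_0)$ with $J_0$ in a small neighborhood of $X$ then upgrades the local statement to cover any prescribed compact $J$.

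Finally, I choose $J$ so that $y$ lies in its interior with a small subarc of $J$ about $y$ contained in $V'$, and take $\varepsilon>0$ small enough that every arc within Hausdorff distance $\varepsilon$ of $J$ near $y$ meets $V'$. Then for every $k\ge K$ we have $F^k(\hat A)\cap V'\ne\emptyset$, so $F^{n_0+k+n_1}(A)\supset F^{k+n_1}(\hat A)$ meets $V$. Since $F^n(A)\subset W^u_F(X)\subset\Lambda$, setting $N:=n_0+K+n_1$ yields $F^n(U)\cap V\cap\Lambda\supset F^n(A)\cap V\ne\emptyset$ for every $n\ge N$. The main obstacle will be step (b): making precise, in the piecewise-smooth setting, that the chosen subarc of $F^k(\hat A)$ near $F^k(Q)$ really does avoid $\Gamma$ for all $k\ge K$, rather than being repeatedly fragmented by preimages of $\Gamma$. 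The ingredients are (P5) together with the exponential decay of $\dist(F^k(Q),X)$, but a careful uniform $C^1$ comparison is needed to upgrade this from a short local piece near $X$ to a full arc $C^1$-approximating a prescribed compact $J\subset W^u_F(X)$.
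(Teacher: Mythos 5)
Your plan is correct in outline, but it implements the key step differently from the paper, so it is worth comparing the two. Both arguments share the same skeleton: reduce to arcs of $W^u_F(X)$ via Proposition~\ref{prop:closure}, use Lemma~\ref{lem:intersect} to produce a smooth subarc $\hat A$ of a forward image of $A\subset U\cap W^u_F(X)$ crossing both $\Gamma$ and $F(\Gamma)$, and invoke Remark~\ref{rem:XM} (explicitly in your case, implicitly in the paper's) to relate such crossing arcs to $[X,M]\subset W^s_F(X)$. Where you differ is the mechanism that forces the images of $\hat A$ to meet $V$: you push $\hat A$ forward and prove an inclination-lemma statement at $X$, so that $F^k(\hat A)$ Hausdorff-approximates any prescribed compact subarc of $W^u_F(X)$, in particular one passing through $F^{-n_1}(P)$ inside $F^{-n_1}(V)$; the paper instead pulls a neighborhood $V_1\subset V$ of $P\in W^u_F(X)$ backward, choosing $k_0$ so that $[F^{-k_0}(P),X]$ lies in the first quadrant where $F^{-1}$ is hyperbolic, and concludes that $F^{-k}(V_1)$, $k\ge k_1$, is met by \emph{every} arc of $\Lambda$ in the first quadrant joining $\Gamma$ to $F(\Gamma)$; the crossing subarc of $F^{n_1}(A)$ then does the rest. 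The paper's backward route avoids any $C^1$-convergence machinery, while your forward route needs the $\lambda$-lemma in a piecewise-smooth setting --- exactly the obstacle you flag. That obstacle is genuine but surmountable: after one step the forward orbit of the crossing point $Q$ lies on the compact segment $[X,F(M)]\subset[X,M]$, which is disjoint from $\Gamma$, so it keeps a uniformly positive distance from the divider; hence a sufficiently small subarc of $\hat A$ around $Q$ shadows this orbit without ever being cut by $\Gamma$ during the finitely many steps needed to enter a small ball $B(X,r)$ with $r<\dist(X,\Gamma)$ (such a ball exists by (P5)), and inside that ball $F$ coincides with the $C^1$ diffeomorphism $F_2$, so the classical inclination lemma (tracking only the component of the image in $B(X,r)$ containing the orbit point, using (S1)--(S2)) applies verbatim; pushing the resulting local approximation of $W^u_{\mathrm{loc}}(X)$ forward by the fixed homeomorphism $F^m$ gives the Hausdorff statement for $J=F^m(J_0)$, which is all your final step uses. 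With that detail supplied, your argument is a valid, somewhat heavier, alternative to the paper's proof; note also that choosing $n_1$ so large that $F^{-n_1}(P)$ lies on the local unstable manifold lets you skip the ``upgrade by $F^m$'' entirely.
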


\begin{proof}
Let $U$ and $V$ be open subsets of $\R^2$ such that $U \cap \Lambda
\ne \emptyset$ and $V \cap \Lambda \ne \emptyset$. By
Proposition~\ref{prop:closure}, $U \cap W^u_F(X) \ne \emptyset$ and $V
\cap W^u_F(X) \ne \emptyset$. Take a point $P \in V \cap W^u_F(X)$.
Since the points $F^{-k}(P)$ belong to $W^u_F(X)$ for every $k \ge 0$,
and the sequence $(F^{-k}(P))_{k=0}^\infty$ converges to $X$ as $k \to
\infty$, there exists $k_0$ such that the arc $[F^{-k_0}(P), X]
\subset W^u_F(X)$ is contained in the first quadrant. The map $F^{-1}$
is hyperbolic in the first quadrant and hence there exists $k_1 \ge
k_0$ and a neighborhood $V_1 \subset V$ of $P$, such that each arc of
$\Lambda$ contained in the first quadrant which intersects both
$\Gamma$ and $F(\Gamma)$, also intersects $F^{-k_1}(V_1)$. Moreover,
every $F^{-k}(V_1)$ for $k \ge k_1$ has the same property.

Since $U \cap W^u_F(X) \ne \emptyset$, there exists an arc $A \subset
U \cap W^u_F(X)$. By Lemma~\ref{lem:intersect}, there exists $n_1$
such that some subarc of $F^{n_1}(A)$ intersect both $\Gamma$ and
$F(\Gamma)$. Therefore, it also intersects $F^{-k}(V_1)$ for all $k
\ge k_1$. Hence, $F^n(U) \cap V \cap W^u_F(X) \supset
F^{n-n_1}(F^{n_1}(A) \cap F^{n_1-n}(V_1)) \ne \emptyset$ for all $n
\ge n_1 + k_1$.
\end{proof}

\section{Symbolic dynamics}\label{sec:symbolic}

In this section we assume that $F$ is a Lozi-like map defined by a
pair of synchronously hyperbolic $C^1$ diffeomorphisms $F_1$, $F_2$
and a divider $\Gamma$, with the attractor $\Lambda$.

We code the points of $\Lambda$ in the following standard way. To a
point $P \in \Lambda$ we assign a bi-infinite sequence $\bar p = \dots
p_{-2} \, p_{-1} \cd \, p_0 \, p_1 \, p_2 \dots$ of signs $+$ and $-$,
such that
\[
p_n =
\begin{cases}
-, & \textrm{if $F^n(P)$ is in the left half-plane},\\
+, & \textrm{if $F^n(P)$ is in the right half-plane}.
\end{cases}
\]
The dot shows where the 0th coordinate is.

A bi-infinite symbol sequence $\bar q = \dots q_{-2} \, q_{-1} \cd \,
q_0 \, q_1 \, q_2 \dots$ is called \emph{admissible} if there is a
point $Q \in \Lambda$ such that $\bar q$ is assigned to $Q$. We call
this sequence an \emph{itinerary} of $Q$. Obviously, some points of
$\Lambda$ have more than one itinerary. We denote the set of all
admissible sequences by $\Sigma_\Lambda$. It is a metrizable
topological space with the usual product topology. Since the left and
right half-planes (with the boundary $\Gamma$) that we use for coding,
intersected with $\Lambda$, are compact, the space $\Sigma_\Lambda$ is
compact.

In~\cite{MS} we did not prove the following two lemmas for the Lozi
maps, because they followed immediately from the results of
Ishii~\cite{I}. However, for the Lozi-like maps we cannot use those
results, so we have to provide new proofs.

\begin{lem}\label{lem:unique}
For every $\bar p \in \Sigma_\Lambda$ there exists only one point $P
\in \Lambda$ with this itinerary.
\end{lem}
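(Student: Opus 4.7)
The plan is to argue by contradiction. Suppose $P, Q \in \Lambda$ are distinct and share the itinerary $\bar p$, so that $F^n(P)$ and $F^n(Q)$ lie in the same half-plane for every $n \in \Z$. Then at each iterate both points are acted on by the same smooth piece of $F$, and the global $C^1$ diffeomorphisms
\[
G_n := F_{p_{n-1}} \circ \cdots \circ F_{p_0} \qquad (n \ge 0),
\]
\[
\tilde G_n := F_{p_{-1}}^{-1} \circ \cdots \circ F_{p_{-n}}^{-1} \qquad (n \ge 0)
\]
satisfy $G_n(R) = F^n(R)$ and $\tilde G_n(R) = F^{-n}(R)$ for $R \in \{P, Q\}$. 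An induction from (S1) and (S2) gives that $DG_n$ preserves the cone field $\C^u$ with expansion at least $\lambda^n$, and $D\tilde G_n$ preserves $\C^s$ with the same expansion. Since $\Lambda$ is bounded, $\|G_n(Q)-G_n(P)\|$ and $\|\tilde G_n(Q)-\tilde G_n(P)\|$ are uniformly bounded in $n$.

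The central estimate I would then establish is that if $Q-P$ lies in $K^u_{\gamma(t)}$ at every $t \in [0,1]$ along the segment $\gamma(t) = (1-t)P + tQ$, the mean-value identity
\[
G_n(Q) - G_n(P) = \int_0^1 DG_n|_{\gamma(t)}(Q-P)\,dt
\]
has an integrand taking values in the universal unstable cone $K^u$, with each value of norm at least $\lambda^n\|Q-P\|$ and with $x$-axis component at least $\ell_u$ times its norm. By convexity of $K^u$, the integral itself lies in $K^u$ and has norm at least $\ell_u \lambda^n \|Q-P\|$, contradicting boundedness as $n \to \infty$. The symmetric statement using $\tilde G_n$ and $\C^s$ handles the case $Q-P \in K^s_{\gamma(t)}$ throughout $\gamma$.

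The main obstacle is reducing to one of these favorable cone positions, since $K^u \cup K^s$ does not cover $\R^2$ and the cone fields vary along $\gamma$. The plan is to exploit the forward attraction of the cocycle toward $K^u$: by (S1) and (S2), any vector outside a thin (backward-invariant) subspace of $K^s$ is rotated into $K^u$ after finitely many forward iterates of the derivative cocycle along the orbit. Applied to $F^N(Q) - F^N(P)$ for suitable $N \ge 0$ and the shifted pair with shifted itinerary $\sigma^N \bar p$, this reduces to the $K^u$ case of the central estimate; if no such $N$ exists, then $Q - P$ must lie in $K^s$ already, giving the backward case. The cone-field hypothesis along segments can be arranged by taking $\C^u$ and $\C^s$ as large as permitted by (S1) and by invoking continuity of the cone fields on the compact segment $\gamma$.
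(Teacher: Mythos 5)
Your ``central estimate'' is fine as far as it goes: if the chord vector $Q-P$ lies in $K^u_{\gamma(t)}$ for \emph{every} point of the segment $\gamma$, then the integral identity together with (S1), (S2) and convexity of each half of the universal cone does force $\|G_n(Q)-G_n(P)\|\ge \ell_u\lambda^n\|Q-P\|$, and similarly backward for the stable cone. The genuine gap is the reduction to one of these two favorable configurations, which is really the whole content of the lemma. The dichotomy you propose --- either some forward iterate puts the chord into the unstable cone field along the whole image segment, or else $Q-P$ lies in the stable cone field --- is not justified and is false as stated: vectors outside $K^u_P\cup K^s_P$ are simply not controlled by (S1)--(S2), and the ``rotation into $K^u$ after finitely many iterates'' is a statement about a single tangent vector transported by the derivative cocycle at one base point, whereas the object you need to control is the chord $F^N(Q)-F^N(P)$, which is the \emph{average} of $DG_N|_{\gamma(t)}(Q-P)$ over $t\in[0,1]$. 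The entry time into $K^u$ (where it exists at all) depends on $t$, is not uniform, and the integrand can sit in opposite halves of the cone for different $t$, so nothing prevents cancellation; moreover the exceptional directions form, at each $\gamma(t)$, a line that varies with $t$, so ``$Q-P\in K^s$ already'' is not the only alternative. A second, related problem: even if the chord happens to lie in the universal cone $K^u$, your estimate needs it in the pointwise cones $K^u_{R}$ at every point $R$ of the segment, and you cannot ``take $\C^u$ and $\C^s$ as large as permitted'' --- the cone fields are part of the given synchronous-hyperbolicity data, and enlarging them would in general destroy the invariance (S1); in particular the universal cones themselves need not be $DF_i$-invariant.

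The standard (and the paper's) way around all of this is not to work with chords at all: since both orbits follow the same symbol sequence $\bar p$, pass to the non-autonomous system $G=(G_n)_{n\in\Z}$ with $G_n\in\{F_1,F_2\}$ prescribed by $p_n$, and invoke the non-autonomous Hadamard--Perron theorem to obtain stable and unstable manifolds of $P$ and $Q$ for $G$; by (S1) and Lemma~\ref{cone2} these are curves tangent to the cone fields and infinite in both directions, so by Lemma~\ref{cone3} the curves $W^s(P)$ and $W^u(Q)$ meet in exactly one point $S$, and the boundedness of both orbits in the compact, completely invariant set $\Lambda$ forces $S=P$ (looking at $n\to-\infty$) and $S=Q$ (looking at $n\to+\infty$), hence $P=Q$. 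If you want to salvage your outline, you essentially have to prove such an invariant-manifold (or invariant-splitting) statement for the sequence $(G_n)$ anyway, at which point the chord/integral machinery becomes unnecessary.
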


\begin{proof}
Let $P,Q\in\Lambda$ be two points with the same itinerary $\bar p \in
\Sigma_\Lambda$. Let us define a non-autonomous dynamical system $G$
in $\R^2$ by the sequence $\bar p$ in the following way:
\[
G = (G_n)_{n \in \Z}, \quad G_n =
\begin{cases}
F_1, & \textrm{ if } p_n = -,\\
F_2, & \textrm{ if } p_n = +.
\end{cases}
\]
Note that $G^n(P) = F^n(P)$ and $G^n(Q) = F^n(Q)$ for every $n \in
\Z$. By~\cite[Proposition 5.6.1 (Hadamard-Perron)]{BS}, for the system
$G$ there exist stable and unstable manifolds of the points $P$ and
$Q$. By Lemma~\ref{cone3}, $W^s(P)$ and $W^u(Q)$ intersect at exactly
one point, $S$. If $S\ne P$, then
\[
\lim_{n\to-\infty}\|G^n(S)-G^n(P)\|=\infty\ \ \textrm{and}\ \
\lim_{n\to-\infty}\|G^n(S)-G^n(Q)\|=0,
\]
so
\[
\lim_{n\to-\infty}\|G^n(P)-G^n(Q)\|=\infty,
\]
a contradiction, because $\Lambda$ is completely invariant and
compact. Similarly, taking the limits as $n\to\infty$, we get a
contradiction if $S\ne Q$. Therefore, $P=S=Q$.
\end{proof}

By Lemma~\ref{lem:unique} and the definition of $\Sigma_\Lambda$, the
map $\pi : \Sigma_\Lambda \to \Lambda$ such that $\bar p$ is an
itinerary of $\pi(\bar p)$ is well defined and is a surjection.

\begin{lem}\label{lem:continuous}
The map $\pi$ is continuous.
\end{lem}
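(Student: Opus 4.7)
The plan is to establish sequential continuity. Suppose $\bar p^{(k)} \to \bar p$ in $\Sigma_\Lambda$, and set $P_k := \pi(\bar p^{(k)})$ and $P := \pi(\bar p)$; the goal is to show $P_k \to P$ in $\Lambda$. Since $\Lambda$ is compact (being a closed, bounded subset of $\Cl\Delta$), it suffices to prove that every subsequential limit of $(P_k)$ coincides with $P$.

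So I would pick a convergent subsequence $P_{k_j} \to P' \in \Lambda$ and verify that $\bar p$ is an itinerary of $P'$; once that is done, Lemma~\ref{lem:unique} immediately forces $P' = P$. Fix $n \in \Z$. Convergence in the product topology on $\Sigma_\Lambda$ gives $p_n^{(k_j)} = p_n$ for every sufficiently large $j$, hence $F^n(P_{k_j})$ lies in the closed half-plane labelled by $p_n$. Continuity of $F^n$ then puts $F^n(P') = \lim_j F^n(P_{k_j})$ in the same closed half-plane. If $F^n(P')$ is in the open half-plane, the sign $p_n$ is its correct label; if $F^n(P') \in \Gamma$, then, as remarked in the paper, such boundary points admit more than one itinerary and both signs are valid assignments at that coordinate — so $p_n$ again is a legitimate label.

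Since $n$ was arbitrary, $\bar p$ is a valid itinerary of $P'$, whence $P' = P$ by Lemma~\ref{lem:unique}. As every convergent subsequence of $(P_k)$ in the compact space $\Lambda$ thus converges to $P$, the whole sequence $P_k$ converges to $P$, proving $\pi$ is continuous.

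The argument is essentially bookkeeping; the only subtle point is the possibility that some iterate of the limit $P'$ falls on the divider $\Gamma$, and this is resolved cleanly by the ambiguity of the coding on $\Gamma$. No further input about the cone fields or hyperbolic estimates beyond those already used in Lemma~\ref{lem:unique} is required.
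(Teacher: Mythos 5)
Your proof is correct and follows essentially the same route as the paper: compactness of $\Lambda$, passing to a convergent subsequence, and reducing everything to Lemma~\ref{lem:unique}, with the coding ambiguity on $\Gamma$ handled via closed half-planes. The only difference is cosmetic — you argue directly that the subsequential limit admits $\bar p$ as an itinerary, whereas the paper argues by contradiction through a coordinate where the symbols must eventually disagree.
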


\begin{proof}
Let $\bar p \in \Sigma_\Lambda$ and let $(\bar p^n)_{n=1}^\infty$ be a
sequence of elements of $\Sigma_\Lambda$ which converges to $\bar p$.
Set $P=\pi(\bar p)$ and $P^n=\pi(\bar p^n)$. We will prove that
$(P^n)_{n=1}^\infty$ converges to $P$.

Let us suppose by contradiction, that $(P^n)_{n=1}^\infty$ does not
converge to $P$. Since $\Lambda$ is compact, there exists a
subsequence of the sequence $(P^n)_{n=1}^\infty$ convergent to some $Q
\ne P$. We may assume that this subsequence is the original sequence
$(P^n)_{n=1}^\infty$.

If for every $j\in\Z$ both $F^j(P)$ and $F^j(Q)$ belong to the same
closed half-plane, then there is $\bar q\in\Sigma_\Lambda$ which is an
itinerary of both $P$ and $Q$. This contradicts
Lemma~\ref{lem:unique}. Therefore, there is $j\in\Z$ such that
$F^j(P)$ and $F^j(Q)$ belong to the opposite open half-planes. Since
the points $P^n$ converge to $Q$, for all $n$ large enough the points
$F^j(P^n)$ and $F^j(P)$ belong to the opposite open half-planes. This
means that for all $n$ large enough we have $p^n_j\ne p_j$. Therefore
the sequence $(\bar p^n)_{n \in \N}$ cannot converge to $\bar p$, a
contradiction. This completes the proof.
\end{proof}

As we mentioned in the introduction, the rest of the results
of~\cite{MS} holds for Lozi-like maps satisfying conditions (L3) and
(L4), and the proofs are practically the same. Therefore we will not
repeat those results here, and will send the reader to~\cite{MS}.

However, in the next section we will be speaking about the
\emph{kneading sequences}, so we have to define them. They are
itineraries of the \emph{turning points}, that is, points of the
intersection $W^u_F(X)\cap F(\Gamma)$. In fact, we will use only the
nonnegative parts of those itineraries, that is the usual sequences
$(p_0,p_1,p_2,\dots)$. In this case, we do not have to worry about a
nonuniqueness of an itinerary. If $P$ is a turning point and
$F^n(P)\in\Gamma$ for some $n>0$, then some neighborhood of $F^n(P)$
along $W^u_F(X)$ belongs to the same half-plane (right or left). In
such a case we will accept only the corresponding symbol ($+$ for
right or $-$ for left) as $p_n$.

\section{Example}\label{sec:ex}

In this section we will give an example of a three-parameter family of
Lozi-like maps, containing the two-parameter family of Lozi maps. One
can expect that it is essentially larger than the Lozi family. We will
provide a strong numerical evidence that some maps in this family have
the set of kneading sequences that does not appear in the Lozi family.

Let us consider the following family of maps:
$F_{a,b,c} : \R^2 \to \R^2$,
\[
F_{a,b,c}(x, y) =  (1 + y - a|x| - cx, bx).
\]
We will use three assumptions on the parameters $a$, $b$ and $c$.
\begin{enumerate}
\item[(A1)] $0 < b < 1$, $c \ge 0$
\item[(A2)] $(2a + b)\left(1 - \frac{c^2}{(a + b)^2}\right) < 4$
\item[(A3)] $(a - c)\sqrt{2} > b+2$
\end{enumerate}

\begin{figure}
	\centering
	\includegraphics[height=8cm]{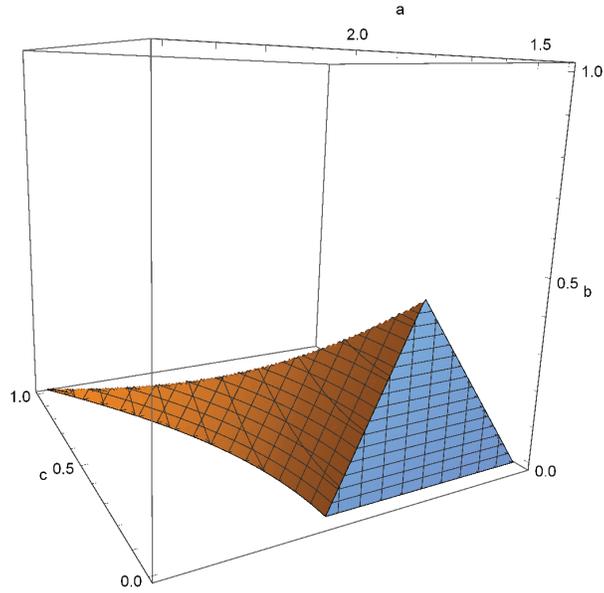}
	\caption{The set of parameters.}
	\label{PS}
\end{figure}

Obviously, for $c = 0$ we get the Lozi family, $F_{a,b,0} = L_{a,b}$.
Let $c \ne 0$, and fix $a$, $b$ and $c$ which satisfy the above
assumptions. For simplicity, let $F := F_{a,b,c}$. We will show that
$F$ is a Lozi-like map.

\begin{prop}\label{prop:ps}
Assume that the parameters $a,b,c$ satisfy {\rm(A1)--(A3)}. Then the
following hold:
\begin{enumerate}[{\rm(C1)}]
\item $2a + b < 2(1 + \sqrt{1 + c^2})$;
\item $c < \min \{ a-b-1, 1 \}$ and $a > b+1$;
\item If $a \ge 1$ then
\[
\frac{(a-c)(2a^2+3a+(2a+1)c)}{4(a+1)^2} \ge
\frac{7}{16}a-\frac{3c^2+2c+2}{16};
\]
\item $\displaystyle{b \le \min \left\{
  \frac{(a-c)(2a^2+3a+(2a+1)c)}{4(a+1)^2}, \frac12 \right\}}$;
\item $a^3 - 4a + a^2c - ac^2 - c^3 - 4bc \ge (-a^2 + 2b +
c^2)\sqrt{(a+c)^2 + 4b}$.
\end{enumerate}
\end{prop}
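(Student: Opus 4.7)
The plan is to verify (C1)--(C5) in order, each deriving from the assumptions (A1)--(A3) and possibly earlier items. For (C1), I would rewrite (A2) as $(2a+b)[(a+b)^2-c^2]<4(a+b)^2$, noting that $(a+b)^2>c^2$ since (A3) and $b>0$ force $a>c$. Suppose for contradiction $2a+b\ge 2(1+\sqrt{1+c^2})$; substituting and rearranging gives $(\sqrt{1+c^2}-1)(a+b)^2<c^2(1+\sqrt{1+c^2})$, and rationalizing via $c^2=(\sqrt{1+c^2}-1)(\sqrt{1+c^2}+1)$ yields $(a+b)^2<(1+\sqrt{1+c^2})^2$, whence $2a+b<2(a+b)<2(1+\sqrt{1+c^2})$, a contradiction. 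For (C2), (A3) gives $a-c>(b+2)/\sqrt{2}\ge b+1$ (the latter holds because $b<1<\sqrt{2}$), so $c<a-b-1$ and $a>b+1$. For $c<1$, combining (A3) in the form $2a+b>(\sqrt{2}+1)b+2\sqrt{2}+2c$ with (C1) produces $(\sqrt{2}+1)b<2(\sqrt{1+c^2}-c)-2(\sqrt{2}-1)$; since $\sqrt{1+c^2}-c=1/(\sqrt{1+c^2}+c)$ strictly decreases to $\sqrt{2}-1$ at $c=1$, any $c\ge 1$ forces $(\sqrt{2}+1)b\le 0$, contradicting $b>0$.

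For (C3), set $P(a,c):=4(a-c)(2a^2+3a+(2a+1)c)-(7a-3c^2-2c-2)(a+1)^2$, so (C3) is equivalent to $P(a,c)\ge 0$. Direct expansion gives $P(a,c)=a^3+(3c^2+2c)a^2-(2c^2+4c+3)a+(2+2c-c^2)$. One checks $P(1,c)=0$ identically in $c$, so $(a-1)\mid P(a,c)$; polynomial division yields $P(a,c)=(a-1)Q(a,c)$ with $Q(a,c)=a^2+(3c^2+2c+1)a+(c^2-2c-2)$. The parabola $Q(\cdot,c)$ opens upward with vertex at $a=-(3c^2+2c+1)/2<0$, so it is increasing on $[1,\infty)$; since $Q(1,c)=4c^2\ge 0$, we have $Q(a,c)\ge 0$ for all $a\ge 1$, yielding (C3).

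For (C4), the bound $b\le 1/2$ follows from the chain derived in (C2): $(\sqrt{2}+1)b<2(\sqrt{1+c^2}-c)-2(\sqrt{2}-1)\le 4-2\sqrt{2}$ (using $\sqrt{1+c^2}-c\le 1$), giving $b<(4-2\sqrt{2})/(\sqrt{2}+1)=6\sqrt{2}-8<1/2$. For the other bound, (A3) with $b>0$ gives $a>\sqrt{2}>1$, so (C3) reduces the claim to $7a-3c^2-2c-2\ge 16b$. Using $a>c+(b+2)/\sqrt{2}$ and $5c-3c^2\ge 0$ on $[0,1]$ (which contains $[0,c)$ by (C2)), the left side exceeds $7\sqrt{2}-2+7b/\sqrt{2}$, so it suffices to check $7\sqrt{2}-2\ge b(16-7/\sqrt{2})$. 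Since $b<6\sqrt{2}-8$, this reduces to $7\sqrt{2}-2\ge(6\sqrt{2}-8)(16-7/\sqrt{2})=124\sqrt{2}-170$, equivalent to $168\ge 117\sqrt{2}$, which holds by squaring: $168^2=28224>27378=2\cdot 117^2$.

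For (C5), let $M:=|\lambda_1|$, the magnitude of the unstable eigenvalue of $DF_2$ at the fixed point $X$; by the characteristic equation, $M^2=(a+c)M+b$ and $M=((a+c)+\sqrt{(a+c)^2+4b})/2$. Using $\sqrt{(a+c)^2+4b}=2M-(a+c)$, direct algebraic manipulation shows (C5) is equivalent to $(a-c)M^2-2bM-2a\ge 0$, which via the characteristic equation becomes $(a^2-c^2-2b)M\ge a(2-b)+bc$. Note $a^2-c^2-2b>0$ (from $a-c>\sqrt{2}$, $a+c>\sqrt{2}$, and $b<1/2$ by (C4)), so the right side of the original (C5), namely $-(a^2-c^2-2b)\sqrt{(a+c)^2+4b}$, is strictly negative; hence (C5) is automatic whenever the left side $L:=(a-c)(a+c)^2-4a-4bc$ is non-negative. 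In the complementary case $L<0$, squaring reduces (C5) to the polynomial inequality $(a^2-c^2-2b)^2((a+c)^2+4b)\ge L^2$, whose verification from the bounds in (A3), (C2) and (C4) by direct expansion is the main technical obstacle; it is facilitated by the identity $(a^2-c^2-2b)^2+(a-c)^2((a+c)^2+4b)=2(a-c)L+4b^2+8a(a-c)$.
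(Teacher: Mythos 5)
Your arguments for (C1)--(C4) are essentially correct. (C1) and (C2) run on the same contradiction scheme as the paper (a tiny point in (C1): dividing by $\sqrt{1+c^2}-1$ tacitly assumes $c>0$; for $c=0$ the claim is just (A2), so nothing is lost). For (C3) your factorization $P(a,c)=(a-1)\bigl(a^2+(3c^2+2c+1)a+(c^2-2c-2)\bigr)$ with $Q(1,c)=4c^2$ checks out and is a clean alternative to the paper's derivative comparison; in (C4) your direct estimate (using $a>c+(b+2)/\sqrt2$, $b<6\sqrt2-8$ and $168>117\sqrt2$) validly replaces the paper's argument with the three lines in the $(a,b)$-plane, and you correctly supply $a>\sqrt2\ge 1$ before invoking (C3).

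However, (C5) is not proved. Your reduction to $(a^2-c^2-2b)M\ge a(2-b)+bc$ with $M=\tfrac12\bigl(a+c+\sqrt{(a+c)^2+4b}\bigr)$ is correct, and so is the observation that (C5) is automatic when $L:=(a-c)(a+c)^2-4a-4bc\ge 0$. But in the relevant parameter region $L$ is typically negative: already for $c=0$ one has $L=a^3-4a<0$ for all admissible $a\in(\sqrt2,2)$. So the case you leave open --- the squared inequality $(a^2-c^2-2b)^2\bigl((a+c)^2+4b\bigr)\ge L^2$ --- is the whole content of (C5), and you explicitly defer it as ``the main technical obstacle''; the identity you quote is true but you do not show how it closes the argument, so the proof stops exactly where the work is. Note that squaring can be avoided altogether, which is the paper's route: since $-a^2+2b+c^2<0$ and $\sqrt{(a+c)^2+4b}>a+c$, it suffices to prove the rational inequality $a^3-4a+(a^2-4b)c-ac^2-c^3\ge(-a^2+2b+c^2)(a+c)$; its left side decreases and its right side increases in $b$, so one may substitute the extreme value $b=(a-c)\sqrt2-2$ allowed by (A3), and the resulting cubic in $a$ has derivative $(a+c)(3a-c-2\sqrt2)>0$ for $a\ge\sqrt2$, so it is enough to check $a=\sqrt2$, where it equals $4c+c^2(2\sqrt2-c)\ge 0$. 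Adopting this (or actually verifying your squared polynomial inequality) is needed to complete (C5).
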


We prove this theorem in Appendix~\ref{AppZ}.

\bigskip\noindent{\bf Basic properties.} Let $F_1, F_2 : \R^2 \to
\R^2$ be as follows:
\[
F_1(x, y) =  (1 + y + (a-c)x, bx), \ F_2(x, y) =  (1 + y - (a+c)x,
bx).
\]
We will first show that $F_1$ and $F_2$ are synchronously hyperbolic.
Let $\Gamma$ be the $y$-axis. Then $F_1(\Gamma) = F_2(\Gamma)$ is the
$x$-axis. Moreover, for every point $P \in \Gamma$, $F_1(P) = F_2(P)$.
The maps $F_1$ and $F_2$ are linear and each of them maps the left
half-plane onto the lower one and the right half-plane onto the upper
one.

The fixed point of $F_1$ is $Y = \left(\frac{1}{1-a-b+c},
\frac{b}{1-a-b+c}\right)$, and by~(C2), it is in the third quadrant.
The fixed point of $F_2$ is $X = \left(\frac{1}{1+a-b+c},
\frac{b}{1+a-b+c}\right)$ and it is in the first quadrant. Both maps
are hyperbolic. The eigenvalues of $DF_1$ are $\hat \lambda_1 =
\frac12\left(a-c+\sqrt{(a-c)^2+4b}\right)$ and $\hat \lambda_2 =
\frac12\left(a-c-\sqrt{(a-c)^2+4b}\right)$, and by~(C2), $\hat
\lambda_1 > 1$ and $-1 < \hat \lambda_2 < 0$. The eigenvalues of
$DF_2$ are $\lambda_1 = \frac12\left(-a-c-\sqrt{(a+c)^2+4b}\right)$
and $\lambda_2 = \frac12\left(-a-c+\sqrt{(a+c)^2+4b}\right)$, and
$\lambda_1 < -1$ and $0 < \lambda_2 < 1$. The eigenvector
corresponding to an eigenvalue $\lambda$ is $\langle \lambda, b
\rangle$. Also, $\det DF_i(P) = -b$ for every point $P \in \R^2$ and
$i = 1, 2$, and by~(A1), $-1 < \det DF_i(P) < 0$, that is,~(L1) holds.

\bigskip\noindent{\bf Universal cones.}
The derivative of $F_i$, $i = 1,2$, is
\[
DF_i = \begin{bmatrix}
	\pm a - c & 1 \\ b & 0
\end{bmatrix},
\]
where the sign depends on $i$, for $i=1$ the sign is $+$ and for $i=2$
the sign is $-$. Let
\[
\begin{bmatrix}
\pm a - c & 1 \\ b & 0
\end{bmatrix}
\begin{bmatrix}
x \\ y
\end{bmatrix}
= \begin{bmatrix}
x' \\ y'
\end{bmatrix}.
\]
We want to find constants $0 < \lambda' < 1 < \lambda''$ such that
$DF_i^{-1}$, $i = 1,2$, maps the stable cone
\begin{equation*}\label{eq:scone}
K^s = \{ \langle x', y' \rangle : \lambda''|x'| \le |y'| \}
\end{equation*}
into itself and expands all its vectors by a factor larger than 1, and
$DF_i$, $i = 1,2$, maps the unstable cone
\begin{equation*}\label{eq:ucone}
K^u = \{ \langle x, y \rangle : |y| \le \lambda'|x| \}
\end{equation*}
into itself and expands all its vectors by a factor larger than 1.

Set $d = \frac12\left(a-c-\sqrt{(a-c)^2-4b}\right)$. By~(A1) and~(A3),
$(a-c)^2-4b > 0$. By~(C2), $d < b$ and consequently
\begin{equation}\label{eq:d}
0 < d < b < 1 < \frac{b}{d}.
\end{equation}

One can prove (see Appendix~\ref{AppW}) that
\begin{equation}\label{eq:stable}
b|x'| \le d|y'| \ \Rightarrow \ b|x| \le d|y|, \ |x'| \le
d|x|\ \ \textrm{and}\ \ |y'| \le d|y|,
\end{equation}
\begin{equation}\label{eq:unstable}
|y| \le d|x| \ \Rightarrow \ |y'| \le d|x'|, \ |y'| \ge
\frac{b}{d}|y|\ \ \textrm{and}\ \ |x'| \ge \frac{b}{d}|x|.
\end{equation}
Consequently, we can take $\lambda' = d$ and $\lambda'' = b/d$.
By~\eqref{eq:d} both $1/d$ and $b/d$ are larger than 1. Note that the
$y$-axis is the axis of $K^s$ and the $x$-axis is the axis of $K^u$.
Since $b > d^2$, the cones $K^s$ and $K^u$ are disjoint. Therefore, by
Definition~\ref{df:uc}, $K^s$ and $K^u$ are the universal pair of
cones. Also, by Definition~\ref{df:sh}, the maps $F_1$ and $F_2$ are
synchronously hyperbolic.

\bigskip{\bf Trapping region.} We will use here notation introduced in
Section~\ref{sec:attractor}. Let us recall that the point where
$W^u_{F_2}(X)$ intersects $\Gamma$ is denoted by $D$. Let $(x_i,
y_i):=F^i(D)$. The point where $W^s_{F_2}(X)$ intersects $\Gamma$ is
denoted by $M = (x_M, y_M)$, see Figure~\ref{fig.BP2}.

Calculating, we obtain
\[
F(D) = \left(\frac{2 + a + c + \sqrt{(a + c)^2 + 4b}}
{2(1 + a - b + c)}, 0 \right),
\]
\[
F^2(D) = \left(\frac{2 - 2 b - (a + c)\left(a + c + \sqrt{(a + c)^2 +
    4b}\right)}{2(1 + a - b + c)}, \frac{b\left(2 + a + c + \sqrt{(a +
    c)^2 + 4b}\right)}{2(1 + a - b + c)} \right).
\]
By~(A1), (A3) and~(C2) it follows that $x_1 > 0$, $x_2 < 0$ and $y_2 >
0$, that is, $F(D)$ belongs to the right half-plane and $F^2(D)$
belongs to the second quadrant. Denote by $B$ the point of
intersection of the $y$-axis with the union of segments $[F(D),
F^3(D)] \cup [F^3(D), F^2(D)]$ (remember that it may happen that
$F^3(D)$ lies in the right half-plane). Conditions~(C4) and~(C5)
imply~(L3), that is, $[F^3(D), F(D)] \cap [X, M] \ne \emptyset$.
Namely,~(C5) implies that $F^3(D)$ lies to the left of the line
through $X$ and $M$, and~(C4) implies that $B$ is above $M$, that is,
$B \in [D, M]$ (for more details see Appendices~\ref{AppA}
and~\ref{AppB}). Moreover,~(A3) implies~(L4), that is, the stretching
factor $\lambda=b/d$ is larger than $\sqrt2$.

Finally, let us prove~(L2), that is, that there exists a trapping
region $\Delta$ (for the map $F$) which is homeomorphic to an open
disk and its closure is homeomorphic to a closed disk.

Let us consider the triangle $\Theta$ with vertices $F^i(D)$, $i =
1,2,3$.
\begin{figure}[h]
\begin{center}
		\begin{tikzpicture}[scale=1.4]
		\tikzstyle{every node}=[draw, circle, fill=white, minimum size=1.5pt,
		inner sep=0pt]
		\node[draw=none, label=above: \tiny $\Gamma$] at (-0.2,2) {};
		\node[draw=none, label=above: \tiny $F(\Gamma)$] at (-4.8,0) {};
		\node[draw=none, label=above: \tiny $D$] at (-0.2,1) {};
		\node[draw=none, label=above: \tiny $B$] at (-0.2,-2) {};
		\node[draw=none, label=above: \tiny $M$] at (-0.2,-2.4) {};
		\node[draw=none, label=above: \tiny $F(B)$] at (1.5,-0.1) {};

		\node[label=above: \tiny $F^4(D)$] (n1) at
		(3*-0.746535, 3*-0.329325) {};
		\node[] (n2) at (3*0.46796, 0) {};
		\node[label=above: \tiny $F^2(D)$] (n3) at
		(3*-1.60933, 3*0.642692) {};
		\node[label=above: \tiny $F(D)$] (n4) at
		(3*1.28538, 0) {};
		\node[label=below: \tiny $F^3(D)$] (n5) at
		(3*-0.65865, 3*-0.804665) {};
		\node (n6) at (3*1.14478, 0) {};
		\node[] (n7) at (3*0.0500411, 3*0.23398) {};
		\node (n8) at (3*0.873107, 0) {};
		\node[label=below: \tiny $F^5(D)$] (n9) at
		(3*-0.39687, 3*-0.373268) {};

		\node[] at (0,3*-0.53204) {};
		\node[] (n14) at (0,3*-0.692499) {};
		\node[label=above: \tiny $X$] (n15) at
		(3*0.395257,3*0.197628) {};
		\node[label=above: \tiny $Y$] at
		(3*-1.07527,3*-0.537634) {};
		\node[] at (0,3*0.2853845) {};

		\node[draw=none] (n10) at (4.5,0) {};
		\node[draw=none] (n11) at (-5,0) {};

		\node[draw=none] (n12) at (0,-2.5) {};
		\node[draw=none] (n13) at (0,2.2) {};

	    \foreach \from/\to in {n1/n2, n2/n3, n3/n4, n4/n5, n5/n6,
	    	n6/n7, n7/n8, n8/n9, n10/n11, n12/n13, n14/n15,
	    n3/n5, n5/n1}
	    \draw (\from) -- (\to);
		\end{tikzpicture}
	\caption{The triangle $\Theta$ and positions of some
		distinguished points.}
	\label{fig.BP2}
\end{center}
\end{figure}
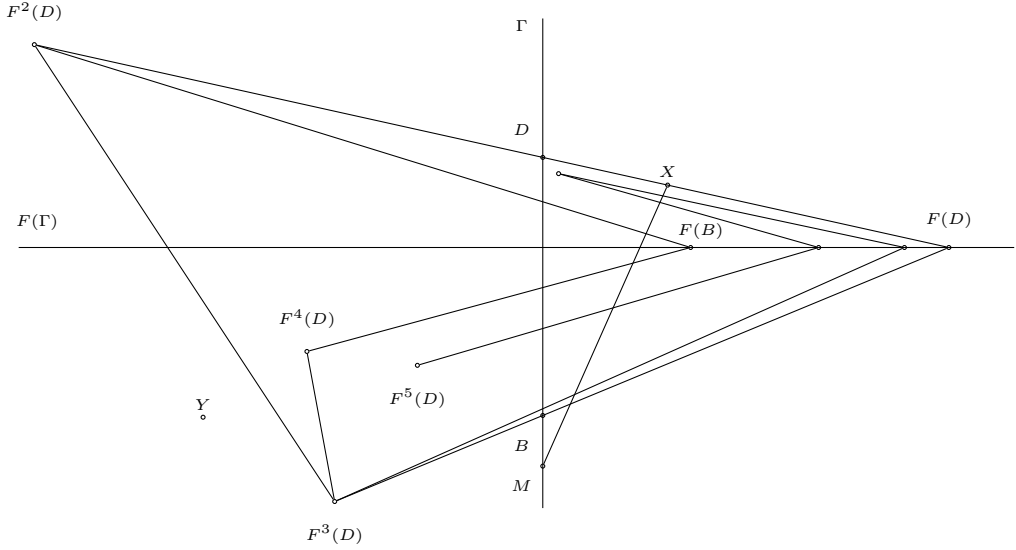

\begin{lem}\label{lem:tau}
Let $a$, $b$, $c$ satisfy {\rm (A1)--(A3)}. Then $F(\Theta) \subset
\Theta$.
\end{lem}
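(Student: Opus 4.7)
The plan is to exploit the piecewise affine structure of $F$. I would split $\Theta = \Theta_L \cup \Theta_R$ along the divider $\Gamma$, where $\Theta_L := \Theta \cap \{x \le 0\}$ and $\Theta_R := \Theta \cap \{x \ge 0\}$; each piece is a convex polygon (a triangle or quadrilateral, depending on the sign of the first coordinate of $F^3(D)$). Since $F_1$ and $F_2$ are affine, their restrictions produce convex polygonal images, and $F(\Theta) = F_1(\Theta_L) \cup F_2(\Theta_R)$. Because $\Theta$ is convex, it suffices to verify that the \emph{vertices} of $F_1(\Theta_L)$ and of $F_2(\Theta_R)$ all lie in $\Theta$; convexity then gives the full inclusion.

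These vertices are the $F$-images of the vertices of $\Theta_L$ and $\Theta_R$, and come in two flavours. First, the three vertices $F(D), F^2(D), F^3(D)$ of $\Theta$ are sent to $F^2(D), F^3(D), F^4(D)$ respectively; since the first two are already vertices of $\Theta$, only the containment $F^4(D) \in \Theta$ needs justification. Second, the points where the edges of $\Theta$ meet $\Gamma$ (the $y$-axis) are mapped by $F$ onto $F(\Gamma)$ (the $x$-axis), so one must verify that these images land inside $\Theta \cap \{y=0\}$, which is the segment joining $F(D)$ to the point where the edge $[F^2(D), F^3(D)]$ crosses the $x$-axis. This latter verification is a direct coordinate calculation from the formulas for $F_1, F_2$, using the parameter inequalities (A1)--(A3) as sharpened by the derived estimates (C1)--(C5).

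The key step is therefore $F^4(D) \in \Theta$, and this is precisely what conditions (C4) and (C5) are engineered to deliver. As already noted in the discussion preceding the lemma (and worked out in detail in Appendices~\ref{AppA} and~\ref{AppB}), (C5) forces $F^3(D)$ to lie to the left of the line through $X$ and $M$, while (C4) places the $\Gamma$-intersection $B$ above $M$; together with the explicit linear formulas for $F_1$ and $F_2$, these two facts pin $F^4(D)$ inside the triangle $\Theta$. I expect this algebraic inequality to be the principal obstacle, since, unlike the hyperbolicity-type estimates, it is a sharp geometric constraint rather than an automatic consequence of the Lozi-like axioms; the bookkeeping split according to whether $F^3(D)$ lies in the third or fourth quadrant is straightforward in either case.
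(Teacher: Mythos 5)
Your reduction is essentially the paper's: $F(\Theta)$ is the union of the two convex polygons $F_1(\Theta\cap\{x\le 0\})$ and $F_2(\Theta\cap\{x\ge 0\})$, whose vertex set is $\{F(D),F^2(D),F^3(D),F^4(D),F(B)\}$, so by convexity of $\Theta$ everything hinges on $F^4(D)\in\Theta$ and $F(B)\in\Theta$. The gap is in your justification of the step you yourself call the principal obstacle, $F^4(D)\in\Theta$. You claim that (C5) ($F^3(D)$ to the left of the line through $X$ and $M$) and (C4) ($B$ above $M$), i.e.\ Appendices~\ref{AppA} and~\ref{AppB}, ``pin $F^4(D)$ inside $\Theta$.'' They do not: those two facts are exactly what yields (L3) and the placement of $F(B)$, but they say nothing about where $F_1$ sends $F^3(D)$ when $F^3(D)$ lies in the left half-plane (the case actually occurring in Figure~\ref{fig.BP2}). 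In that case $F^4(D)$ could a priori escape through the edge $[F^2(D),F^3(D)]$, and the paper needs a separate input: condition (A2) --- the analogue of the classical Lozi condition $2a+b<4$ --- which via Appendix~\ref{AppC} gives that $F^4(D)$ lies to the right of the line through $F^2(D)$ and $F^3(D)$. Your proposal never invokes (A2) for this purpose, so the key inequality is left unproved; (C4)--(C5) were engineered for (L3) and the $B$, $M$, $F(M)$ bookkeeping, not for this fold estimate.

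Two further points of comparison. Even granting the Appendix~\ref{AppC} inequality, being to the right of the line through $F^2(D),F^3(D)$ is only one of the three half-plane conditions defining $\Theta$; the paper disposes of the other two at the outset by noting that, since $F$ maps each half-plane linearly (the line through $D,X,F(D)$ being $F_2$-invariant and its left half being mapped into the line through $F(D),F^3(D)$), the whole set $F(\Theta)$ lies below the line through $F^2(D),F(D)$ and above the line through $F^3(D),F(D)$; you would need this observation or an explicit coordinate check in its place. For $F(B)$, the paper's route is cleaner than your proposed ``direct coordinate calculation'': (C4) gives $B\in[D,M]$, hence $F(B)\in[F(M),F(D)]$, and (C5) gives $F(M)\in\Theta$, so the whole segment lies in $\Theta$. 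Finally, in the case when $F^3(D)$ is in the right half-plane the paper uses (C2) and (C4) to place $F^3(D)$ in the triangle with vertices $X,M,D$, whence $F^4(D)$ lies in the triangle with vertices $X,F(M),F(D)\subset\Theta$; so even there the inputs are not quite the ones you cite.
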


\begin{proof}
Since $F$ is linear on both the left and the right half-planes, the
set $F(\Theta)$ lies above the line through the points $F^3(D)$ and
$F(D)$, and below the line through the points $F^2(D)$ and $F(D)$. The
condition $B \in [D, M]$ implies $F(B) \in [F(M), F(D)]$. Recall that
by~(C5), $F^3(D)$ lies to the left of the line through $X$ and $M$.
Therefore $F(M)$, which is the point of intersection of $[X, M]$ and
$F(\Gamma)$, belongs to $\Theta$. Thus $[F(M), F(D)] \subset \Theta$.

The set $F(\Theta)$ is a pentagon with vertices $F^i(D)$, $i =
1,2,3,4$ and $F(B)$. If $F^4(D) \in \Theta$, then they all belong to
$\Theta$ and, consequently, $F(\Theta) \subset \Theta$. By~(C2)
and~(C4), if $F^3(D)$ lies in the right half-plane, then $F^3(D)$
belongs to the triangle with vertices $X$, $M$, $D$, and hence
$F^4(D)$ belongs to the triangle with vertices $X$, $F(M)$, $F(D)$
which is contained in $\Theta$. If $F^3(D)$ lies in the left
half-plane, then~(A2) implies that $F^4(D)$ lies to the right of the
line through the points $F^2(D)$ and $F^3(D)$ (see
Appendix~\ref{AppC}), and this completes the proof.
\end{proof}

Now we can define a trapping region in the same way as in~\cite{M}. We
have
\[
F([F^2(D), F^3(D)] \cup [F^3(D), F(D)] \setminus \{ F^2(D), F(D) \})
\subset \Int \Theta,
\]
\[
F([F^2(D), D]) = [F^3(D), F(D)].
\]
Therefore, $F^{-4}(\Theta)$ is a neighborhood of
\[
[F^3(D), F(D)] \cup [F^2(D), F^3(D)] \cup [F^2(D), D].
\]
Since $[D, F(D)]$ is a local unstable manifold of the hyperbolic fixed
point $X$, there exists a rectangle $R$ contained in the first
quadrant, with the sides parallel to the eigenvectors of $DF(X)$, such
that
\[
\Int R \supset [D, F(D)] \setminus F^{-4}(\Int \Theta)
\]
and
\[
F(R) \subset \Int R \cup F^{-4}(\Int \Theta).
\]
Define $\Delta := R \cup F^{-4}(\Theta)$. The set $\Delta$ is a
compact neighborhood of $\Theta$. Let us show that it is a trapping
region.

\begin{lem}\label{lem:tau1}
Let $a$, $b$, $c$ satisfy {\rm (A1)--(A3)}. Then $F(\Delta) \subset
\Int\Delta$.
\end{lem}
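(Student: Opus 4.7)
The plan is to split $F(\Delta)=F(R)\cup F^{-3}(\Theta)$ using $\Delta=R\cup F^{-4}(\Theta)$, and show each piece sits inside $\Int\Delta$. The piece $F(R)$ is built directly into the construction of $R$: the defining property $F(R)\subset\Int R\cup F^{-4}(\Int\Theta)$, together with the fact that $F$ is a homeomorphism (so $F^{-4}(\Int\Theta)=\Int(F^{-4}(\Theta))$), immediately gives $F(R)\subset\Int\Delta$. All the remaining work is in showing $F^{-3}(\Theta)\subset\Int\Delta$.

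\textbf{Key reduction to $F(\Theta)\cap\partial\Theta$.} For $q\in F^{-3}(\Theta)$, Lemma~\ref{lem:tau} gives $F^{4}(q)\in F(\Theta)\subset\Theta$; if $F^{4}(q)\in\Int\Theta$ then $q\in F^{-4}(\Int\Theta)\subset\Int\Delta$, so the only remaining case is $F^{4}(q)\in F(\Theta)\cap\partial\Theta$. The pentagon $F(\Theta)$ described in the proof of Lemma~\ref{lem:tau} has vertices $F^{2}(D),F(D),F^{3}(D),F^{4}(D),F(B)$: the first three are triangle vertices, hence lie on $\partial\Theta$, while $F^{4}(D)$ lies in $\Int\Theta$ by the proof of Lemma~\ref{lem:tau} and $F(B)$ lies in $\Int\Theta$ (it is an interior point of the segment $\Theta\cap F(\Gamma)$). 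The delicate structural fact, which I expect to be the main obstacle, is that two whole edges of $\Theta$ coincide with pentagon edges: the edge $[F(D),F^{2}(D)]$ lies on the straight unstable eigenline $W^{u}_{F_{2}}(X)$, so $F_{2}([D,F(D)])=[F(D),F^{2}(D)]$ is simultaneously a triangle edge and a pentagon edge; and $[F(D),F^{3}(D)]=F_{1}([D,F^{2}(D)])$, since $F_{1}(D)=F(D)$ and $F_{1}(F^{2}(D))=F^{3}(D)$. The remaining three pentagon edges are straight segments whose interiors sit in $\Int\Theta$, so $F(\Theta)\cap\partial\Theta=[F(D),F^{2}(D)]\cup[F(D),F^{3}(D)]$.

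\textbf{Finishing via preimages.} With this identification it suffices to show $F^{-4}([F(D),F^{2}(D)]\cup[F(D),F^{3}(D)])\subset\Int\Delta$. Inverting the formulas above, and using that $F$ maps the right half-plane onto the upper one (so the preimage of each edge lies in the correct half-plane where a single linear piece applies), one computes $F^{-1}([F(D),F^{2}(D)])=[D,F(D)]$ and $F^{-1}([F(D),F^{3}(D)])=[D,F^{2}(D)]\subset[F(D),F^{2}(D)]$. Since $\lambda_{1}<-1$ is the unstable eigenvalue of $DF_{2}(X)$, $F_{2}^{-1}$ contracts on $W^{u}_{F_{2}}(X)$ by $|1/\lambda_{1}|<1$ while flipping the side of $X$, and a direct parameter check gives $F^{-k}([D,F(D)])\subset[D,F(D)]$ for all $k\ge 0$. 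Iterating, all relevant preimages end up inside $[D,F(D)]$, and by the defining property of $R$, $[D,F(D)]\subset\Int R\cup F^{-4}(\Int\Theta)\subset\Int\Delta$. This finishes the verification that $F^{-3}(\Theta)\subset\Int\Delta$, and together with the setup step yields $F(\Delta)\subset\Int\Delta$.
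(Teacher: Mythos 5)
Your proof is correct, and it reaches the conclusion by a route that is organized differently from the paper's. The paper disposes of everything in two lines of set algebra: $F(R)\subset\Int\Delta$ by construction, and $F(\Delta\setminus R)=F(F^{-4}(\Theta)\setminus R)=F^{-4}\bigl(F(\Theta\setminus F^4(R))\bigr)\subset F^{-4}(\Int\Theta)\subset\Int\Delta$, using that $F^4(R)\supset [F(D),F^2(D)]\cup[F(D),F^3(D)]$ and that $F$ maps $\Theta$ off these two edges into $\Int\Theta$. You instead treat $F^{-3}(\Theta)$ pointwise: either $F^4(q)\in\Int\Theta$, or $F^4(q)$ lies in $F(\Theta)\cap\partial\Theta$, which you identify with the two edges $[F(D),F^2(D)]\cup[F(D),F^3(D)]$, and you then pull these edges back four times along the unstable eigenline, using $|\lambda_1|>1$ to land inside $[D,F(D)]\subset\Int R\cup F^{-4}(\Int\Theta)$. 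The two arguments rest on the same two facts, but your explicit computation that $F^{-4}$ of the two edges collapses into $[D,F(D)]$ is exactly the content hiding behind the paper's unexplained claim $F^4(R)\supset[F(D),F^2(D)]\cup[F(D),F^3(D)]$, so your version is longer but makes that step transparent; the paper's version buys brevity by packaging the same information into one inclusion about $R$.

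Two small points of bookkeeping. First, your assertions that the three ``new'' pentagon edges, $F^4(D)$ and $F(B)$ lie in $\Int\Theta$ are precisely the displayed containment $F\bigl([F^2(D),F^3(D)]\cup[F^3(D),F(D)]\setminus\{F^2(D),F(D)\}\bigr)\subset\Int\Theta$ stated in the construction of $\Delta$ just before the lemma; cite that rather than the proof of Lemma~\ref{lem:tau}, which only gives $F^4(D)\in\Theta$ (for your argument the weaker containment $F(\Theta)\cap\partial\Theta\subseteq[F(D),F^2(D)]\cup[F(D),F^3(D)]$ would in any case suffice). Second, no ``direct parameter check'' is needed for $F^{-k}([D,F(D)])\subset[D,F(D)]$: since each backward image stays in the closed first quadrant, $F^{-1}$ acts there as $F_2^{-1}$, which on the eigenline multiplies the coordinate centered at $X$ by $1/\lambda_1\in(-1,0)$, and an interval containing $X$ is mapped inside itself; the only thing to verify at each step is that the segment remains in the closed upper (respectively lower) half-plane so that a single linear branch applies, which you should say explicitly.
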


\begin{proof}
By the definition, $F(R) \subset \Int \Delta$. Also $F^4(R) \supset
[F(D), F^2(D)] \cup [F(D), F^3(D)]$. Therefore, $F(\Theta \setminus
F^4(R)) \subset \Int \Theta$. Thus,
\[
F(\Delta \setminus R) = F(F^{-4}(\Theta)\setminus R) = F^{-4}(F(\Theta
\setminus F^4(R))) \subset F^{-4}(\Int \Theta) \subset \Int \Delta.
\]
Consequently, $F(\Delta ) \subset F(R) \cup F(\Delta \setminus R)
\subset \Int \Delta$.
\end{proof}

Obviously, $\Delta$ is homeomorphic to a closed disk, its interior is
homeomorphic to an open disk, so we have proved that $F$
satisfies~(L2). Therefore, $F$ is Lozi-like.

\bigskip\noindent{\bf Differences between the families $L_{a,b}$ and
  $F_{a,b,c}$.} Recall that $L_{a,b} = F_{a,b,0}$. We want to show
that the family $F_{a,b,c}$ is essentially larger than the family
$L_{a,b}$. That is, we want to find parameters $a', b', c'$ such that
the set of kneading sequences of $F_{a',b',c'}$ is different from the
set of kneading sequences of any $L_{a,b}$.

We will not prove that rigorously, but we will present a very strong
numerical evidence. We will comment on the reliability of our
computations later.

By~(C2), we have $c^2<a^2+(1-b)^2$, so there is a unique
point $Q$ such that $Q$ is in the right half-plane, $F(Q)$ is in the
left half-plane, and $F^2(Q)=Q$. Let us consider the maps $F_{a,b,c}$
with the following properties:
\begin{enumerate}
\item[(F1)] The point $F^3_{a,b,c}(D)$ lies in the left half-plane and
  $F^4_{a,b,c}(D) \in [X, M] \subset W^s_F(X)$.
\item[(F2)] The point $F(B)$ lies on the stable manifold of $F_1\circ
  F_2$ of $Q$.
\end{enumerate}

Assumption~(F1) means that the nonnegative part of the largest
kneading sequence is $+--\rpinf$. Assumption~(F2) means that the
nonnegative part of the smallest kneading sequence is $(+-)^\infty$.
Note that compared to Figure~\ref{fig.BP2}, there is a difference:
$F^4(D)$ lies in the fourth quadrant, and $F^5(D)$ lies in the first
quadrant (see Figure~\ref{lc0}).

\begin{figure}
	\begin{center}
	\includegraphics[height=7cm]{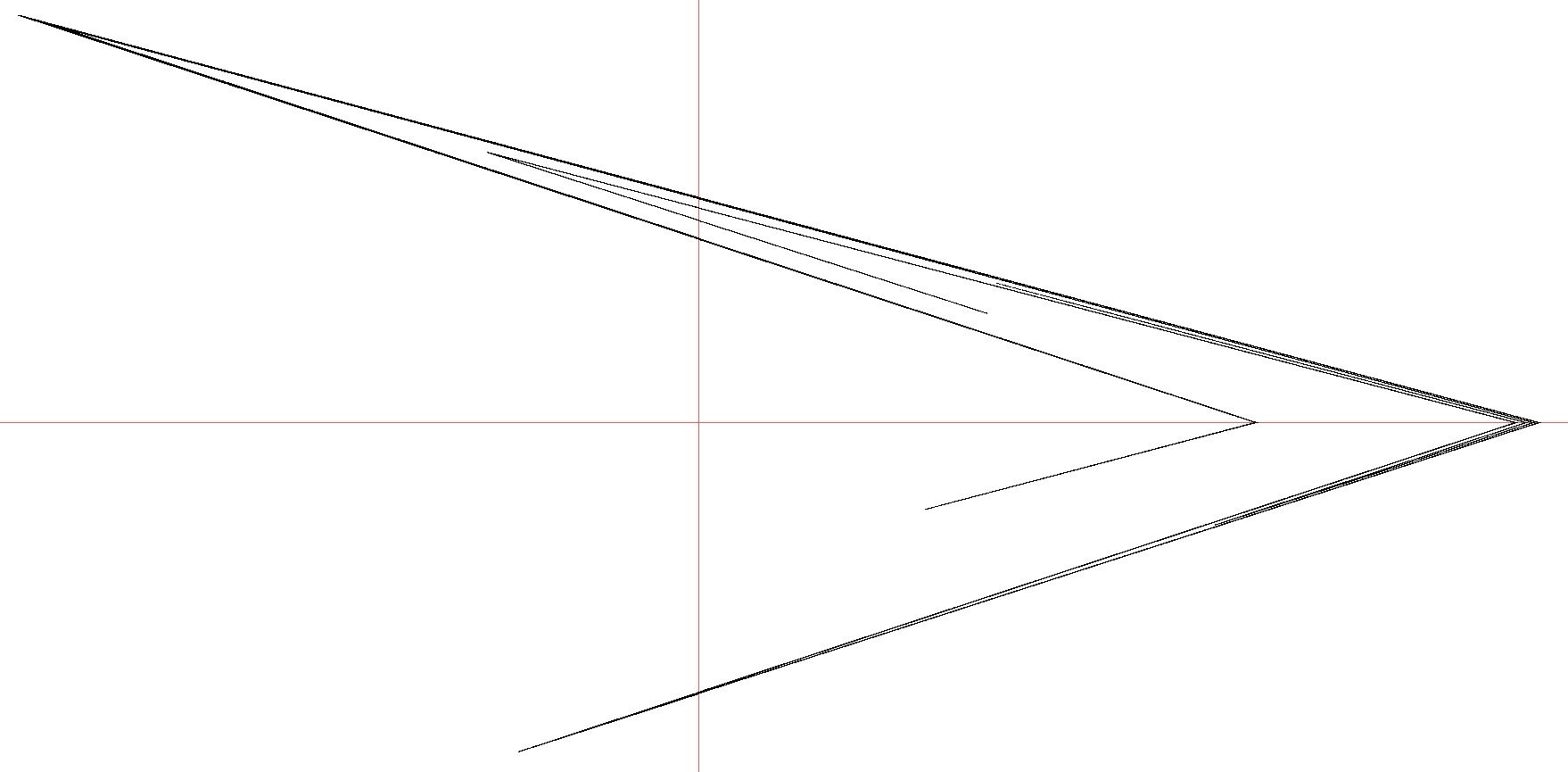}
	\caption{Attractor for the Lozi map with parameters described
          by~(F1') and~(F2'). The $y$-coordinate is stretched by
          factor $7/4$.}\label{lc0}
        \end{center}
\end{figure}

Assumption~(F1) gives the equation
\begin{equation}\label{F1}
\begin{split}
(a^2 - c^2)^2 - 6a^2 -4a & + 4 b^2 + a^2b + c(2a - 2ab + 5bc)\\ &
+ (a^3 + 2a - ab +c^3 - a^2c - ac^2 + 3bc)\sqrt{4 b + (a + c)^2} = 0
\end{split}
\end{equation}
Assumption~(F2) gives the equation
\begin{equation}\label{F2}
\begin{split}
-1 & + \frac{1 + a - b - c}{a^2 + (-1 + b)^2 - c^2} + \frac{(-1 + a +
  b + c)(a^2 - c^2 - \sqrt{(a^2 - c^2)(a^2 - 4 b - c^2)})} {2(a +
  c)(a^2 + (-1 + b)^2 - c^2)}\\ &\phantom{mmmmm}
 - \frac{2b}{-2 b + c + \sqrt{4 b + (a
    + c)^2} - a(3 + 2a + 2c - 2\sqrt{4 b + (a + c)^2})} = 0
\end{split}
\end{equation}
(For more details, see Appendix~\ref{AppD}.)
For $c=0$ these two equations are (we simplify the second equation)
\begin{enumerate}
\item[(F1')] $a^4 - 6a^2 -4a + 4 b^2 + a^2b + (a^3 + 2a - ab)\sqrt{4 b
  + a^2} = 0$,
\item[(F2')] $\displaystyle{4\cdot\frac{-a^2-2b^2+2b+a
    \sqrt{a^2-4b}}{a-2b-\sqrt{a^2-4b}}-
  \left(2+a-\sqrt{a^2+4b}\right)\left(3a-\sqrt{a^2+4b}\right)=0}.$
\end{enumerate}

Computer plots of the graphs of the above equations are presented in
Figure~\ref{f:plot}. The graphs are smooth and evidently intersect
each other at one point (although we cannot claim that we proved it).
Moreover, using the ``NSolve'' command of Wolfram Mathematica produces
a unique solution to this system of equations in the region
$a\in[1,2]$, $b\in[0,1]$. This solution is approximately
\[
a = 1.65531960296885174459210852526,\ \ \
b = 0.276507107967726099812119447619.
\]

The problem is that we do not know how the computer produces graphs or
solves a system of equations. Thus, it could happen that the graphs
have more branches. To eliminate this possibility, we checked for
both~(F1') and~(F2') whether the left-hand side is positive or
negative (in the region mentioned above), see Figure~\ref{f:plot1}.
This method would identify (although with limited precision)
additional solutions. However, it gave us the same result as before.

\begin{figure}
	\begin{center}
	\includegraphics[height=8cm]{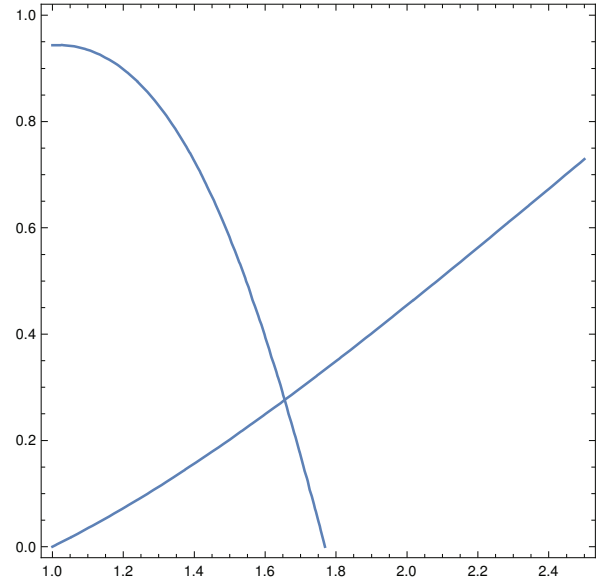}
	\caption{Graphs of (F1') and (F2').}\label{f:plot}
        \end{center}
\end{figure}

\begin{figure}
	\begin{center}
	\includegraphics[height=8cm]{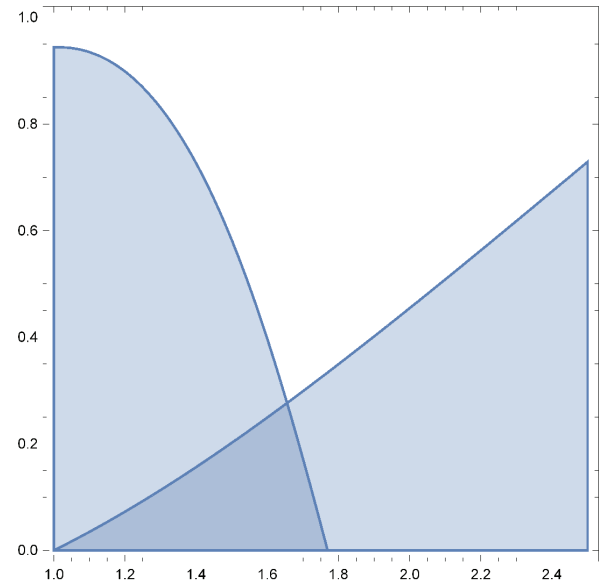}
	\caption{Equations (F1') and (F2') as inequalities.}\label{f:plot1}
        \end{center}
\end{figure}

For $c=0.1$ the values of $a$ and $b$ are approximately
\[
a = 1.63537454884191587958622457986,\ \ \
b = 0.276988367360779957370639853557.
\]
Here we do not have to worry about the uniqueness. We just need some
values of parameters. One can check that in both cases, $c=0$ and
$c=0.1$, the map satisfies conditions~(A1)--(A3).

We computed kneading sequences using Free Pascal. Re-checking
numerically nonnegative parts of the largest and smallest kneading
sequences in both cases, we get correct signs for at least 68
iterates. Since the maximal stretching factor of $F$ is about $1.8$,
and the precision of our floating point computations is about 19
decimal places, this is what we could expect.

Now we look at the nonnegative parts of the kneading sequence of the
turning point $S$ which is the next after $F(D)$ along the unstable
manifold of $X$ in that direction (of course, $S$ depends on $c$). In
Figure~\ref{lc0}, this turning point is the leftmost of the right
group of turning points. For $c=0$, the nonnegative part of the
kneading sequence starts with $+--+++-+-+-+++-++$, while for $c=0.1$
it starts with $+--+++-+-+-+++++-$. We see a difference at the place
corresponding to $F^{14}(S)$. The distance of this point from the
divider is about $0.6$ for $c=0$ and about $8\cdot 10^{-4}$ for
$c=0.1$ (if one wants much larger distances, they appear for
$F^{16}(S)$). The roundoff error, even if we take into account
accumulation of errors, should not be larger than $10^{-14}$, so the
results are quite reliable.

\begin{rem}
Note that even for the case $c=0$ (that is, for the Lozi maps)~(C4)
and~(C5) imply~(L3). In~\cite{M},~(L3) was obtained by a stronger
condition, $b<(a^2-1)/(2a+1)$. By replacing this condition by our part
of (C4), $b \le \frac{(a-c)(2a^2+3a+(2a+1)c)}{4(a+1)^2}$, we get a
slightly larger region in the parameter plane for the Lozi family,
where the hyperbolic attractor exists. This new region is the triangle
bounded by the lines $b=0$, $2a+b=4$, and $a\sqrt2=b+2$, according
to~(A1)--(A3). The gain is a small region close to the top of this
triangle, that was cut off in~\cite{M}.
\end{rem}

\begin{rem}
Although we were concerned only with the nonnegative parts of the
kneading sequences, it is clear that the whole largest (respectively,
smallest) kneading sequences are the same for the cases $c=0$ and
$c=0.1$.
\end{rem}

\appendix
\section*{Appendix}\label{sec:A}
\renewcommand{\thesubsection}{\Alph{subsection}}

\subsection{Proof of Theorem~\ref{prop:ps}}\label{AppZ}

\noindent\textbf{(C1)} Suppose that $2a + b \ge 2(1 + \sqrt{1 +
  c^2})$. Then, by~(A2), $2(1 + \sqrt{1 + c^2})(1 - \frac{c^2}{(a +
  b)^2}) < 4$, and hence, $\frac{c^2}{(a + b)^2} > 1 - \frac{2}{1 +
  \sqrt{1 + c^2}} = \frac{c^2}{(1 + \sqrt{1 + c^2})^2}$. Therefore, $a
+ b < 1 + \sqrt{1 + c^2}$. Then $2(1 + \sqrt{1 + c^2}) \le 2a + b < a
+ (1+\sqrt{1 + c^2})$, so $a > 1 + \sqrt{1 + c^2}$, and hence $1 +
\sqrt{1 + c^2} < a < a+b < 1 + \sqrt{1 + c^2}$, a contradiction.

\medskip\noindent\textbf{(C2)} Since $0 < b < 1$, we have
$-\frac{b+2}{\sqrt{2}} < -b-1$, so $c < a - \frac{b+2}{\sqrt{2}} <
a-b-1$. In particular, $a-b-1 > 0$. Now we will prove that $c < 1$.
By~(C1), $2a < 2 + 2\sqrt{1+c^2} - b$. By~(A3), $2a > 2c + \sqrt2 b +
2\sqrt2$. Thus, $2c + \sqrt2 b + 2\sqrt2 < 2 + 2\sqrt{1+c^2} - b$, so
$0 < (\sqrt2 + 1)b < 2(1 + \sqrt{1+c^2} - c - \sqrt2 )$. Set $\phi(c)
= 1 + \sqrt{1+c^2} - c - \sqrt2$. Then $\phi(1) = 0$ and $\phi'(c) =
\frac{c}{\sqrt{1+c^2}} - 1 < 0$. Thus, $c < 1$.

\medskip\noindent\textbf{(C3)} We
have $$\frac{(a-c)(2a^2+3a+(2a+1)c)}{4(a+1)^2} = \frac{a}{2} - \frac14
-\frac{2ac+(2a+1)c^2-1}{4(a+1)^2}.$$ Thus, we only need to prove that
if $a \ge 1$ then
\begin{equation}\label{e:ps3}
\frac{2ac+(2a+1)c^2-1}{4(a+1)^2} \le \frac{3c^2+2c+2}{16} - \frac14 +
\frac{a}{16}.
\end{equation}
If $a=1$,~\eqref{e:ps3} is an equality. The derivative with respect to
$a$ of the left-hand side is
\[
\frac{-ac^2-ac+c+1}{2(a+1)^3} \le \frac{1}{2(a+1)^3} \le \frac{1}{16}
\]
if $a \ge 1$, while the derivative of the right-hand side is
$\frac{1}{16}$. Thus, the inequality~\eqref{e:ps3} holds.

\medskip\noindent\textbf{(C4)} We know by~(A3) that
\begin{equation}\label{ps5a}
\sqrt2 a - b > \sqrt2 c + 2.
\end{equation}
Also, by~(C1), $2a+b < 2\left(1 + \sqrt{1+c^2}\right)$. However,
$\sqrt{1+c^2} \le 1+c$, so
\begin{equation}\label{ps5b}
2a + b < 4 + 2c.
\end{equation}
By~(C3), to prove that $b \le
\frac{(a-c)(2a^2+3a+(2a+1)c)}{4(a+1)^2}$, we only need to prove that
\begin{equation}\label{ps5c}
b < \frac{7}{16}a - \frac{3c^2+2c+2}{16}.
\end{equation}

Consider three lines in the $(a,b)$-plane given by equalities
in~\eqref{ps5a},~\eqref{ps5b} and~\eqref{ps5c}. Taking into account
the slopes of those lines, we see that if $(a_0, b_0)$ is the point of
intersection of two first lines, it is enough to prove
that~\eqref{ps5c} holds for $a = a_0$, $b = b_0$. We have $a_0 =
\frac{6}{2+\sqrt2} + c$ and $b_0 = 4 - \frac{12}{2+\sqrt2} < \frac12$,
so, in particular, $b < \frac12$. Hence, it is enough to prove that
$\frac12 < \frac{7}{16}\left(\frac{6}{2+\sqrt2} + c\right) -
\frac{3c^2+2c+2}{16}$, that is $\frac{42}{2+\sqrt2} - 10 + 5c - 3c^2 >
0$. However, $\frac{42}{2+\sqrt2} > 10$, and, by~(C2), $5c > 3c^2$.

\medskip\noindent\textbf{(C5)} By~(A3), $a > \sqrt2$, so $a^2 > 2$.
By~(C4), $b < \frac12$, so $2b < 1$. By~(C2) and~(A1), $0 \le c < 1$,
so $c^2 < 1$. Thus, $-a^2 + 2b + c^2 < 0$. Therefore, $(-a^2 + 2b +
c^2)\sqrt{(a+c)^2 + 4b} < (-a^2 + 2b + c^2)(a+c)$, so if
\begin{equation}\label{e:ps6a}
a^3 - 4a + (a^2 - 4b)c - ac^2 - c^3 \ge (-a^2 + 2b + c^2)(a+c),
\end{equation}
then~(C5) holds. Thus, we will be proving~\eqref{e:ps6a}.

By~(A3), $b < (a-c)\sqrt2 - 2 =: b_1$. If~\eqref{e:ps6a} holds for $b
= b_1$, then replacing $b_1$ by $b$ in~\eqref{e:ps6a} will make the
left-hand side larger and the right-hand side smaller,
so~\eqref{e:ps6a} still holds. Therefore, it is enough to
prove~\eqref{e:ps6a} for $b = b_1$, that is
\begin{equation}\label{e:ps6b}
a^3 + (c - \sqrt2)a^2 + (-c^2 - 2\sqrt2 c)a + (-c^3 + 3\sqrt2 c^2 +
6c) \ge 0.
\end{equation}
The derivative of the left-hand side of~\eqref{e:ps6b} with respect to
$a$ is
\[
3a^2 + 2(c-\sqrt2)a + (-c^2-2\sqrt2 c) = (a+c)(3a -c - 2\sqrt2).
\]
Since $a \ge \sqrt2$, we have $3a - c - 2\sqrt2 \ge 3\sqrt2 - 1 -
2\sqrt2 > 0$. Therefore, the left-hand side of~\eqref{e:ps6b}
increases with $a$. Thus, it is enough to check \eqref{e:ps6b} for $a
= \sqrt2$:
\[
2\sqrt2 + 2(c-\sqrt2) + \sqrt2 (-c^2 - 2\sqrt2 c) + (-c^3 + 3\sqrt2
c^2 + 6c) = 4c + c^2(2\sqrt2 - c) > 0.
\]

\subsection{Existence of universal cones}\label{AppW}

Both $d$ and $b/d$ are roots of the equation $\lambda^2-(a-c)\lambda +
b = 0$, so in particular, $d+b/d=a-c$.

In order to prove~\eqref{eq:stable}, we may assume that $y'=1$ and
$|x'|\le d/b$. Then $x=1/b$ and $y=x'-\frac{\pm a-c}{b}$, and we have
to prove that
\begin{equation}\label{eq:st1}
1\le d\left|x'-\frac{\pm a-c}{b}\right|,\ \ \frac{d}{b}\le d\cdot
\frac1b,\ \ 1\le d\left|x'-\frac{\pm a-c}{b}\right|.
\end{equation}
The second inequality holds and the first and third are identical. We
have
\[
\left|x'-\frac{\pm a-c}{b}\right|\ge\frac{a-c}{b}=|x'|\ge
\frac{a-c}b-\frac{d}b=\frac{b}{db}=\frac1d,
\]
so~\eqref{eq:st1} holds.

Similarly, in order to prove~\eqref{eq:unstable}, we may assume that
$x=1$ and $|y|\le d$. Then $x'=\pm a-c+y$ and $y'=b$, and we have to
prove that
\begin{equation}\label{eq:un1}
b\le d|\pm a-c+y|,\ \ b\ge\frac{b}{d}\cdot d,\ \ |\pm a-c+y|\ge
\frac{b}{d}.
\end{equation}
Again, the second inequality holds and the first and third are
equivalent. We have
\[
|\pm a-c+y|\ge a-c-d=\frac{b}{d},
\]
so~\eqref{eq:un1} holds.

\subsection{$F^3(D)$ lies to the left of the line through $X$ and $M$}
\label{AppA}

There are several ways to find a condition which implies that $F^3(D)$
lies to the left of the line through $X$ and $M$. The simplest
condition is $y_3 - y_Q > 0$, where $Q = (x_Q, y_Q)$ denotes the point
on the line through $X$ and $M$ such that $x_Q = x_3$.
Calculation shows that $y_Q = sx_3 + y_M$, where
\[
s = \frac{2 b}{-a - c +\sqrt{4 b + (a + c)^2}}
\]
is the slope of the line through $X$ and $M$, and
\[
y_M = \frac{b}{1 + a - b + c} - \frac{2 b}{(1 + a - b + c) (-a - c + \sqrt{4
  b + (a + c)^2})}.
\]
Now the inequality $y_3 - y_Q > 0$ follows from~(C5).

\subsection{$B$ is above $M$}\label{AppB}

If $F^3(D)$ lies in the right half-plane, then $B \in [F^3(D),
F^2(D)]$ and by Appendix~\ref{AppA}, $B$ is above $M$. If $F^3(D)$
lies in the left half-plane, then $B \in [F(D), F^3(D)]$. Let $N =
(x_N, y_N)$ denote the point of intersection of $[X, M]$ and $[F(D),
F^3(D)]$. Then $B$ being above $M$ is equivalent to $x_N > 0$.
Calculation shows that $x_N = y_M + s_{13}x_1/(s_{13} - s)$, where
\[
s_{13} = \frac{b (3 a - c + \sqrt{4 b + (a + c)^2})}{-2 b + 4 a (a -
  c)}
\]
is the slope of the line through $F(D)$ and $F^3(D)$. Now the
inequality $x_N > 0$ follows from~(C4).

\subsection{$F^4(D)$ lies to the right of the line through $F^2(D)$
  and $F^3(D)$}\label{AppC}

A condition which implies that $F^4(D)$ lies to the right of the line
through $F^2(D)$ and $F^3(D)$ is $x_4 - x_P > 0$, where $P = (x_P,
y_P)$ denotes the point on the line through $F^2(D)$ and $F^3(D)$ such
that $y_P = y_4$. Calculation shows that $y_P = x_2 + (y_4 -
y_2)/s_{23}$, where
\[
s_{23} = \frac{2 b^2}{2 a^2 + a \left(3 b + 2 c - 2 \sqrt{4 b + (a +
    c)^2}\right) - b \left(c + \sqrt{4 b + (a + c)^2}\right)}
\]
is the slope of the line through $F^2(D)$ and $F^3(D)$. Now the
inequality $x_4 - x_P > 0$ follows from~(A2).

\subsection{Differences between the families $L_{a,b}$ and
  $F_{a,b,c}$}\label{AppD}

Condition $F^4_{a,b,c}(D) \in [X, M]$ is equivalent to the equality
$y_4 - y_X - s(x_4 - x_X) = 0$ (where $X = (x_X, y_X)$), and gives the
equation~(\ref{F1}).

For the assumption~(F2), note first that $Q$ has coordinates
\[
Q = (x_Q, y_Q) = \left(\frac{1 + a - b - c}{a^2 + (-1 + b)^2 - c^2},
-\frac{b (-1 + a + b + c)}{a^2 + (-1 + b)^2 - c^2}\right),
\]
and the stable manifold of $F_1\circ F_2$ of $Q$ has slope
\[
s_2 = \frac{2 b (a + c)}{a^2 - c^2 - \sqrt{-4 b^2 + (a^2 - 2 b -
    c^2)^2}}.
\]
Also,
\[
F(B) = (x_{B_1}, y_{B_1}) = \left(1 - \frac{b (3 a - c + \sqrt{4 b +
    (a + c)^2})(2 + a + c + \sqrt{4 b + (a + c)^2})}{2 (-2 b + 4 a (a
  - c))(1 + a - b + c)}, 0\right).
\]
The condition that $F(B)$ lies on the stable manifold of $F_1\circ
F_2$ of $Q$ is equivalent to the equality $y_Q - s_2(x_Q - x_{B_1}) =
0$ and gives the equation~(\ref{F2}).

\medskip
\noindent
Micha{\l} Misiurewicz\\
Department of Mathematical Sciences\\
Indiana University -- Purdue University Indianapolis\\
402 N.\ Blackford Street, Indianapolis, IN 46202\\
\texttt{mmisiure@math.iupui.edu}\\
\texttt{http://www.math.iupui.edu/}$\sim$\texttt{mmisiure}

\medskip
\noindent
Sonja \v Stimac\\
Department of Mathematics\\
Faculty of Science, University of Zagreb\\
Bijeni\v cka 30, 10\,000 Zagreb, Croatia\\
\texttt{sonja@math.hr}\\
\texttt{http://www.math.hr/}$\sim$\texttt{sonja}

\end{document}